\providecommand{\scr}{\mathcal} %one mathscinet reference uses this
\newtheorem{theorem}{Theorem}[section]
\newtheorem{lemma}[theorem]{Lemma}
\newtheorem{corollary}[theorem]{Corollary}
\newtheorem{proposition}[theorem]{Proposition}
\newtheorem{example}[theorem]{Example}
\theoremstyle{definition}
\newtheorem{definition}[theorem]{Definition}
\newtheorem*{assumption*}{Standing Assumption}
\theoremstyle{remark}
\newtheorem{remark}[theorem]{Remark}
\def\newspan{\operatorname{span}}
\def\Cspan{\overline{\newspan}_\C}
\newcommand{\C}{\mathbb{C}}
\newcommand{\N}{\mathbb{N}}
\newcommand{\Z}{\mathbb{Z}}
\newcommand{\DD}{\mathcal{D}}
\newcommand{\XX}{\mathcal{X}}
\newcommand{\ZZ}{\mathcal{Z}}
\newcommand{\OO}{\mathcal{O}}
\DeclareMathOperator{\sgn}{sgn}
\numberwithin{equation}{section}
\newcommand{\eup}{essentially unique partition of the unit}
\newcommand{\nice}{a unital, commutative subring of $\C$ closed under complex conjugation that has an \eup}
\title[The Cuntz splice does not preserve $*$-isomorphism]{The Cuntz splice does not preserve $*$-isomorphism of Leavitt path algebras over $\Z$}
\author{Rune Johansen}
\address[Rune Johansen]{Department of Mathematical Sciences, University of Copenhagen, Denmark}
\email{rune@math.ku.dk}
\author{Adam P W S{\o}rensen}
\address[Adam P W S{\o}rensen]{Department of Mathematics, University of Oslo, Norway}
\email{apws@math.uio.no}
\date{\today}
\begin{document}
%%%%%%%%%%%%%%

\begin{abstract}
We show that the Leavitt path algebras $L_{2,\Z}$ and $L_{2-,\Z}$ are not isomorphic as $*$-algebras. 
There are two key ingredients in the proof.
One is a partial algebraic translation of Matsumoto and Matui's result on diagonal preserving isomorphisms of Cuntz--Krieger algebras. 
The other is a complete description of the projections in $L_{\Z}(E)$ for $E$ a finite graph. 
This description is based on a generalization, due to Chris Smith, of the description of the unitaries in $L_{2,\Z}$ given by Brownlowe and the second named author. 
The techniques generalize to a slightly larger class of rings than just $\Z$.
\end{abstract}

\maketitle

\section{Introduction}

How are two Leavitt path algebras related if the underlying graphs are related by the so called Cuntz splice?
This is arguably one of the most important open problems in the theory of graph algebras.
More concretely, the question is whether the Cuntz splice preserves either the Morita equivalence class or the isomorphism class of the Leavitt path algebras considered as either rings, algebras, or $*$-algebras.
In this paper, we provide the first answer to any of these questions:
We show that there cannot exist a $*$-isomorphism between the Leavitt path algebras over the ring of integers induced by the graphs $E_2$ and $E_{2-}$ shown below. 

\begin{center}
\begin{tikzpicture}[shorten >=1pt]

\tikzset{vertex/.style = {shape=circle,draw,minimum size=2em}}
\tikzset{edge/.style = {->,> = latex'}}

\node at (0,0) {$E_2$};

% vertices
\node[vertex] (u) at (2,0) {$u$};

%edges
\draw[edge] (u) to[loop, out=120, in=60, looseness=8] node [above] {$a$} (u);
\draw[edge] (u) to[loop, out=240, in=300, looseness=8] node [below] {$b$} (u);

\end{tikzpicture}
\end{center}
%and 

\begin{center}
\begin{tikzpicture}[shorten >=1pt]

\tikzset{vertex/.style = {shape=circle,draw,minimum size=2em}}
\tikzset{edge/.style = {->,> = latex'}}

\node at (0,0) {$E_{2-}$};

% vertices
\node[vertex] (u) at  (2,0) {$u$};
\node[vertex] (v1) at  (4,0) {$v_1$};
\node[vertex] (v2) at  (6,0) {$v_2$};

%edges
\draw[edge] (u) to[loop, out=120, in=60, looseness=8] node [above] {$f_1$} (u);
\draw[edge] (u) to[loop, out=240, in=300, looseness=8] node [below] {$f_2$} (u);
\draw[edge] (u) to[bend left] node [midway, above] {$d_1$} (v1);

\draw[edge] (v1) to[loop, out=120, in=60, looseness=8] node[above] {$e_1$} (v1);
\draw[edge] (v1) to[bend left] node[midway, above] {$e_2$} (v2);
\draw[edge] (v1) to[bend left] node[midway, below] {$d_2$} (u);

\draw[edge] (v2) to[loop, out=30, in=-30, looseness=8] node[right] {$e_4$} (v2);
\draw[edge] (v2) to[bend left] node[below] {$e_3$} (v1);

\end{tikzpicture}
\end{center}

\noindent Some authors prefer the name $R_2$ (``rose with two petals'') for the graph we call $E_2$.

The interest in isomorphism and Morita equivalence of Leavitt path algebras of graphs related by the Cuntz splice stems from a desire to understand whether Leavitt path algebras can be classified through algebraic analogues of the celebrated results of R\o rdam (\cite{RordamClassificationOfCK}) and Kirchberg--Phillips (\cite{KirchbergClassification, KirchbergPhillipsEmbedding, PhillipsClassification}).
We now give a short overview of some of the historic developments in the field.
      
To each irreducible non-permutation matrix $A \in M_n(\N)$, one can associate a shift of finite type, $\XX_A$. 
For such shift spaces, the Bowen--Franks invariant consisting of the Bowen--Franks group $BF(A) = \Z^n / (I-A)\Z^n$ and the sign of the determinant of $I - A$ is a complete invariant of flow equivalence \cite{BowenFranks, FranksFlowEquivalence, ParrySullivan}.
To each non-permutation matrix $A \in M_n(\N)$, one can also associate  a $C^*$-algebra $\OO_A$, called the Cuntz--Krieger algebra, and the stable isomorphism class of $\OO_A$ is a flow invariant for $\XX_A$.
In fact, $K_0(\OO_A) \cong BF(A)$.
Considering the flow classification of $\XX_A$, one might wonder if $\OO_A$ also contains enough information to determine the other half of the Bowen--Franks invariant, $\sgn(\det(I-A))$. On its own, $\OO_A$ does not: R{\o}rdam proved that simple Cuntz--Krieger algebras are classified up to stable isomorphism by their $K_0$-groups \cite{RordamClassificationOfCK}.

Leavitt path algebras were introduced independently in \cite{AbramsArandaPino05} and \cite{AraMorenoPardo} as universal algebras over some field $K$ with relations given by the underlying graph.
They are purely algebraic analogues of graph $C^*$-algebras, which, in turn, are generalizations of Cuntz--Krieger algebras, see for instance \cite{BatesPaskRaeburnSzymanski, EnomotoWatatani, FowlerLacaRaeburn, KumjianPaskRaeburn, KumjianPaskRaebrunRenault, RaeburnBook}.
More generally, work of Tomforde shows how to define a Leavitt path algebra over any unital, commutative ring \cite{TomfordeLeavittOverRing}.
A good overview of the theory of Leavitt path algebras can be found in the survey paper \cite{AbramsFirstDecade}.

In \cite{AbramsLoulyPardoSmith}, it is shown that if $\XX_A$ is flow equivalent to $\XX_B$ for finite irreducible non-permutation matrices $A$ and $B$, then $L_K(E_A)$ is Morita equivalent to $L_K(E_B)$ for any field $K$. 
It is also shown that the $K_0$-group of $L_K(A)$ coincides with the Bowen--Franks group of $A$, so that a Leavitt path algebra is classified by its $K_0$-group and the sign of the determinant. 
However, it is an open problem whether the sign of the determinant is an isomorphism invariant, and this leaves the theory of Leavitt path algebras in a situation very similar to the situation in the theory of Cuntz--Krieger algebras prior to R\o rdam's classification result.

For the graphs $E_2$ and $E_{2,-}$ defined above, it is straightforward to check that the associated Bowen--Franks groups are trivial.
However, the signs of the determinants are different, and hence, the associated shifts of finite type are not flow equivalent. 
The graph $E_{2-}$ is constructed from $E_2$ by attaching two extra vertices and the associated edges. 
This gluing operation is called ``performing a Cuntz splice'', and it has played an important role in the study Cuntz--Krieger algebras, graph $C^*$-algebras, and Leavitt path algebras \cite{AbramsLoulyPardoSmith, ERRS, RestorffClassificationOfCK, RordamClassificationOfCK, RuizTomforde,  SorensenGeometricClassification}. 
It serves to flip the sign of the determinant while keeping the Bowen--Franks group fixed.
The graph $E_2$ is the simplest possible example of a graph where this operation can be performed without obviously changing Morita equivalence class.

In R{\o}rdam's work, it was shown that $C^*(E_2) \cong C^*(E_{2-})$, and this was then used to derive the general non-reliance on the sign of the determinant \cite{RordamClassificationOfCK}. 
If we wish to pursue a similar approach to the classification of Leavitt path algebras and understand whether there can exists an algebraic Kirchberg--Phillips theorem in this setting, then we must know whether the Leavitt path algebras associated to $E_2$ and $E_{2-}$ are isomorphic.

Our main results are Theorem \ref{thm: algebraic matsumoto matui}, Theorem \ref{thm: isomorphism same sign}, and Corollary \ref{cor: only diagonal projections}.
All these results concern a subalgebra of $L_{R}(E)$ called the diagonal, see Definition \ref{def: diagonal}.
Specifically, Theorem \ref{thm: algebraic matsumoto matui} states that for a subring $R$ of $\C$ that is closed under complex conjugation and graphs $E$ and $F$ satisfying standard assumptions, there can only exist a diagonal preserving $*$-isomorphism between $L_R(E)$ and $L_R(F)$ if the Bowen--Franks determinants are equal.
This is a partial algebraic analogue of a result of Matsumoto and Matui (\cite{MatsumotoMatui}) and it is derived directly from their result. 
Theorem \ref{thm: isomorphism same sign} shows that if all projections in $L_{R}(F)$ are elements of the diagonal, then $L_{R}(E)$ and $L_{R}(F)$ can only be isomorphic as $*$-algebras if their Bowen--Franks determinants are equal.
Corollary \ref{cor: only diagonal projections} shows that all projections of $L_{\Z}(E)$ are elements of the diagonal if $E$ is a finite graph.

In the initial version of this paper, we only proved that all projections in $L_{\Z}(E_2)$ are diagonal, which was sufficient to let us conclude that $L_{\Z}(E_2)$ and $L_{\Z}(E_{2-})$ are not $*$-isomorphic.
Our arguments were based on a complete description of the unitaries in $L_{\Z}(E_2)$ given in \cite{BS}.
Since then, Chris Smith has shown us how to extend the description of the unitaries in $L_{\Z}(E_2)$ from \cite{BS} to cover all finite graphs, see Proposition \ref{prop: standard form unitaries}, and we have realized how to extend this from $\Z$ to slightly more general rings.
Therefore, we can show that all projections in $L_{R}(E)$ are diagonal whenever $E$ is  a finite graph and $R$ is \nice\ (see Definition \ref{def:PropertyX}). 
In particular, this holds if $R = \Z$.

The three main results combine to show that if $E,F$ are finite, strongly connected, essential and non-trival graphs (see Definition \ref{def: graph concepts}), if $R$ is \nice\ and if $L_{R}(E)$ and $L_{R}(F)$ are $*$-isomorphic, then $\sgn(I - A_E) = \sgn(I - A_F)$ (Corollary \ref{cor: iso implies same sign}). 
Possible applications of the results to questions concerning Morita equivalence and Leavitt path algebras over fields are discussed in Remarks \ref{remark:fields} and \ref{remark:morita}.

Since the initial version of this paper appeared on the arXiv, there have been some interesting developments related to the problem discussed here. 
In \cite[Corollary 4.8]{AraBosaHazratSims} the authors use groupoid techniques to show that when $R$ is an integral domain, there can be no diagonal preserving \emph{ring} isomorphism from $L_{2,R}$ to $L_{2-,R}$.
In \cite[Corollary 4.7]{CarlsenRuizSims}, it is shown that $L_{2,\Z}$ to $L_{2-,\Z}$ are not stably isomorphic as $*$-algebras, in a suitable sense.

\section{Preliminaries}

\begin{definition}
A graph $E = (E^0, E^1, r, s)$ is a four tuple where $E^0$ is the vertex set, $E^1$ the edge set, and $r,s \colon E^1 \to E^0$ are the range and source maps.
\end{definition}

\noindent Given a graph $E$, we denote by $A_E$ its adjacency matrix, i.e.\ the $E^0 \times E^0$ matrix with $A(v,w) = |s^{-1}(v) \cap r^{-1}(w)|$. 
The following graph concepts will be needed below. 

\begin{definition}[{\cite{KumjianPaskRaeburn}}]
A graph $E$ satisfies Condition (L) if every cycle in $E$ has an exit. 
\end{definition}

\begin{definition} \label{def: graph concepts}
A graph $E$ is said to be:
\begin{itemize}
	\item \emph{strongly connected} if there is a path between any two vertices, 
	\item \emph{essential} if it has no sinks or sources, and
	\item \emph{trivial} if it is a single cycle with no other edges or vertices.
\end{itemize}
\end{definition}

\noindent Note that if $E$ is an essential and strongly connected graph then $A_E$ will be an irreducible matrix with no zero rows or columns.
Furthermore, if $E$ is non-trivial then $A_E$ will not be a permutation matrix. 

For a graph $E$, $E^n$ will denote the set of \emph{paths} of length $n$, and $E^* = \cup_{n \in \N} E^n$ will denote the collection of all finite paths. The range and source maps are extended to these sets in the natural way. Similarly, $E^\infty = \{ e_1 e_2 \cdots \mid e_i \in E^1 \textnormal{ and } r(e_i) = s(e_{i+1}) \}$ will denote the set of infinite paths. 
In the case of $E_2$, $E_2^\ast = \{a,b\}^* $ and $E_2^\infty = \{a,b\}^\N$. The \emph{cylinder set} of $\alpha \in E^\ast$ is defined to be
\begin{displaymath}  
\ZZ(\alpha) = \{ \alpha \xi \mid \xi \in E^\infty \textnormal{ and } \alpha \xi \in E^\infty\}.
\end{displaymath}
For $\alpha, \beta \in E^*$, $\ZZ(\alpha)$ and $\ZZ(\beta)$ are disjoint if and only if neither $\alpha$ nor $\beta$ extends the other. 

To each graph $E$, we associate a graph $C^*$-algebra and a collection of Leavitt path algebras as defined below.

\begin{definition}[{\cite[Definition 1]{FowlerLacaRaeburn}}]
Let $E$ be a graph. 
The graph $C^*$-algebra of $E$, $C^*(E)$, is the universal $C^*$-algebra generated by mutually orthogonal projections $\{ p_v \mid v \in E^0 \}$ and partial isometries $\{ s_e \mid e \in E^1 \}$ subject to the relations 
\begin{enumerate}[(i)]
 \item $s_e^* s_f = 0$, if $e \neq f$,
 \item $s_e^* s_e = p_{r(e)}$, 
 \item $s_e s_e^* \leq p_{s(e)}$, and,
 \item $p_v = \sum_{e \in s^{-1}(v)} s_e s_e^*$, if $s^{-1}(v)$ is finite and nonempty. 
\end{enumerate}

\end{definition}

\noindent Readers unfamiliar with the subject should be warned that there are two competing conventions for the definition of $C^*(E)$. This dichotomy stems from a necessary asymmetry between sinks and sources in the defining relations. 
%In particular, 
Raeburn's monograph \cite{RaeburnBook} uses the other possible convention.

\begin{definition}[{\cite[Definitions 2.4 and 3.4]{TomfordeLeavittOverRing}}] \label{def:lpa}
Let $R$ be a commutative ring with unit and let $E$ be a graph. 
The Leavitt path algebra of $E$ over $R$ is the universal $R$-algebra generated by pairwise orthogonal idempotents $\{ v \mid v \in E^0\}$ and elements $\{e, e^* \mid e \in E^1\}$ satisfying 
\begin{enumerate}[(i)]
 \item $e^* f = 0$, if $e \neq f$,
 \item $e^* e = r(e)$, 
 \item $s(e) e = e = e r(e)$, 
 \item $e^* s(e) = e^* = r(e) e^*$, and, 
 \item $v = \sum_{e \in s^{-1}(v)} e e^*$, if $s^{-1}(v)$ is finite and nonempty. \label{item:summation_relation}
\end{enumerate}
\end{definition}

By \cite[Proposition 3.4]{TomfordeLeavittOverRing}, $L_R(E) = \newspan_{R}\{ \alpha \beta^* \mid \alpha, \beta \in E^*, r(\alpha) = r(\beta)\}$.
For any unital, commutative ring $R$, we can extend the map $\alpha \beta^* \mapsto \beta \alpha^*$ to a \emph{linear} involution on $L_R(E)$. 
If $R$ is a subring of $\C$ that is closed under complex conjugation, we will
extend $\alpha \beta^* \mapsto \beta \alpha^*$ to a \emph{conjugate linear} involution on $L_{R}(E)$ instead.
Throughout this paper, we are mainly interested in the latter case.

Since we consider $L_{R}(E)$ as a $*$-algebra, we prefer to work with self-adjoint idempotents rather than just idempotents.
Following the name conventions of $C^*$-algebras, we call these elements projections.

\begin{definition}
An element $p \in L_{R}(E)$ is called a \emph{projection} if $p = p^2 = p^*$.
\end{definition}

Each graph algebra contains a distinguished subalgebra called the diagonal:     

\begin{definition} \label{def: diagonal}
Let $E$ be a graph and $R$ a commutative ring with unit. 
We define 
\[
  \DD(C^*(E)) = \Cspan\{ s_\alpha s_\alpha^* \mid \alpha \in E^* \},
\]
and 
\[
 \DD(L_R(E)) = \newspan_{R}\{ \alpha \alpha^* \mid \alpha \in E^* \}.
\]
We refer to these sets as the diagonal of $C^*(E)$ and $L_R(E)$, respectively. 
\end{definition}

\noindent It is shown in \cite[Theorem 5.2]{HopenwasserPetersPower} and \cite[Theorem 3.7]{NagyReznikoff}) that $\DD(C^*(E))$ is a MASA (maximal abelian sub-algebra) in $C^*(E)$ if $E$ satisfies Condition (L).
We note that $\DD(L_R(E))$ is generated by projections.
It has recently been shown that if $E$ satisfies condition (L) then $\DD(L_{R}(E))$ is maximal abelian in $L_R(E)$ (see \cite[Proposition 3.12 and Theorem 3.9]{GilNasr} and \cite[Lemma 3.13]{AraBosaHazratSims}).

We are interested in subrings of $\C$ that behave like $\Z$ in the following very specific way.

\begin{definition}[Essentially unique partition of the unit] \label{def:PropertyX}
A unital (same unit) subring $R \subset \C$ closed under complex conjugation has an \emph{\eup} if whenever $\lambda_1, \lambda_2, \ldots, \lambda_n \in R$ satisfy
\[
  \sum_{i=1}^n \lambda_i \overline{\lambda_i} = \sum_{i=1}^n |\lambda_i|^2 = 1,  
\]
then all but one of the $\lambda_i$ is zero. 
\end{definition}

\begin{example} 
The following is an incomplete list of subrings of $\C$ with an \eup.
\begin{itemize}
 \item $\Z$. 
 \item $\Z + \sqrt{p}\Z$, for $p$ a prime. 
 \item The Gaussian integers, $\Z + i\Z$.
 \item The ring $\Z[\pi]$, $\Z + \pi\Z + (\pi^2)\Z + \cdots$.
\end{itemize}
\end{example}

\begin{remark} \label{rmk:PropX_nohalf}
Suppose $R \subset \C$ has an \eup. 
For each $n \neq 1$, $\frac{1}{n}\notin R$ since 
\[
  \sum_{k=1}^{n^2} \left( \frac{1}{n} \right)^2 = 1.
\]
Note that this means that no subfield of $\C$ has an \eup.
\end{remark}

\section{Matsumoto and Matui's Result for Graph Algebras}

In \cite{MatsumotoMatui}, Matsumoto and Matui show that under standard assumptions on matrices $A$ and $B$, there can only exist a diagonal preserving isomorphism between the Cuntz--Krieger algebras (see Definition \ref{def: Cuntz Krieger Algebra}) $\OO_A$ and $\OO_B$ if $\sgn(\det (I - A)) = \sgn( \det(I - B))$.
In this section, we will translate this part of Matsumoto and Matui's spectacular result into the world of graph algebras. In Section \ref{subsection:cstar}, the result is translated into a statement about graph $C^*$-algebras and in Section \ref{subsection:leavitt}, this is used to prove an algebraic analogue for Leavitt path algebras over subrings of $\C$. Both results are straightforward adaptations of those presented in \cite{MatsumotoMatui}.

\subsection{Graph $C^*$-algebras}\label{subsection:cstar}

First, recall the definition of a Cuntz--Krieger algebra and its diagonal (originally from \cite{CuntzKrieger}). 

\begin{definition} \label{def: Cuntz Krieger Algebra}
Let $A$ be an $N \times N$ matrix with entries in $\{0,1\}$. 
The Cuntz--Krieger algebra $\OO_A$ is the universal $C^*$-algebra generated by partial isometries $S_1, S_2, \ldots, S_N$ satisfying the relations
\begin{itemize}
 \item $\sum_{i=1}^N S_i S_i^* = 1$, and, 
 \item $S_i^* S_i = \sum_{j=1}^N A(i,j) S_j S_j^*$,
\end{itemize}
where $A(i,j)$ denotes the $ij$'th entry of $A$. 
The diagonal of $\OO_A$ is 
\[
  \DD(\OO_A) = \Cspan\{ S_{i_1} S_{i_2} \ldots S_{i_k} S_{i_k}^* \ldots S_{i_1}^* \mid k \in \N, i_1, i_2, \ldots i_k \in \{1,2,\ldots, N\}  \}. 
\]
\end{definition}

We observe that for two indicies $i,j$ we have 
\[
  S_i S_j = S_i S_i^* S_i S_j = S_i \left( \sum_{k=1}^N A(i,k) S_{k} S_{k}^* \right) S_j = A(i,j) S_i S_j.
\]
Hence, $S_i S_j = 0$ if $A(i,j) = 0$. 

Condition (L) for graphs was designed to be an equivalent of Cuntz and Krieger's Condition (I) (\cite{CuntzKrieger}).
We recall that a $\{0,1\}$ matrix will satisfy Condition (I) if it is not a permutation matrix. 

Graph $C^*$-algebras were originally defined to be generalizations of Cuntz--Krieger algebras, and there are strong connections between the two concepts. 
We have been unable to find a reference for the following lemma, but it is certainly well-known to researchers in the field.

\begin{lemma} \label{lem:CKisGraph}
Let $A$ be an irreducible $N \times N$ matrix with entries in $\{0,1\}$ that satisfies Condition (I). Let $E_A$ be the graph with $E_{A}^0 = \{1,2, \ldots, N\}$, $E_{A}^1 = \{ [ij] \mid A(i,j) = 1 \}$ and range and source maps given by 
\[
  r([ij]) = j \quad \quad \text{ and } \quad \quad s([ij]) = i. 
\]
There exists an isomorphism $\phi \colon C^*(E_A) \to \OO_{A}$ such that $\phi(\DD(C^*(E_A))) = \DD(\OO_A)$. 
\end{lemma}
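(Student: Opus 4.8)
The plan is to construct $\phi$ directly from the universal property of $C^*(E_A)$ and then verify it is an isomorphism carrying the diagonal onto the diagonal. First I would set up the candidate generators in $\OO_A$ indexed by the data of $E_A$. For each vertex $i \in E_A^0 = \{1,\ldots,N\}$ I set $P_i = S_i S_i^*$, and for each edge $[ij] \in E_A^1$ (so $A(i,j) = 1$) I set $T_{[ij]} = S_i S_j S_j^* = S_i P_j$. The idea is that $T_{[ij]}$ records ``emit the letter $i$ and then land in the state $j$''. I would then check that $\{P_i\}$ and $\{T_{[ij]}\}$ satisfy the five Cuntz--Krieger relations of Definition \ref{def:lpa}-style for $C^*(E)$ (items (i)--(iv) of the graph $C^*$-algebra definition). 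The orthogonality $\sum_i P_i = \sum_i S_i S_i^* = 1$ gives mutually orthogonal projections; relations (i)--(iii) follow from the Cuntz--Krieger relation $S_i^* S_i = \sum_k A(i,k) S_k S_k^*$ together with the observation already recorded in the excerpt that $S_i S_j = A(i,j) S_i S_j$. By the universal property of $C^*(E_A)$ there is then a $*$-homomorphism $\phi \colon C^*(E_A) \to \OO_A$ with $\phi(p_i) = P_i$ and $\phi(s_{[ij]}) = T_{[ij]}$.

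Next I would build the inverse, or equivalently prove surjectivity and injectivity separately. For surjectivity, I would express each $S_i$ in terms of the image of $\phi$: since $\sum_j A(i,j) P_j = S_i^* S_i$, one recovers $S_i = S_i (S_i^* S_i) = \sum_{j} A(i,j) S_i P_j = \sum_{j} T_{[ij]}$, so every generator $S_i$ of $\OO_A$ lies in the image of $\phi$; hence $\phi$ is surjective. For injectivity I would invoke a gauge-invariant uniqueness theorem: both $C^*(E_A)$ and $\OO_A$ carry canonical circle (gauge) actions, and Condition (I) for $A$ is equivalent to Condition (L) for $E_A$ (the excerpt notes a $\{0,1\}$ matrix satisfies Condition (I) precisely when it is not a permutation matrix, and irreducibility plus non-permutation forces every cycle in $E_A$ to have an exit). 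Since $\phi$ is equivariant for the gauge actions and is nonzero on each vertex projection $p_i$ (because $\phi(p_i) = S_i S_i^* \neq 0$), the gauge-invariant uniqueness theorem for graph $C^*$-algebras yields injectivity.

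Finally I would check the diagonal is preserved. This is essentially a bookkeeping computation: for a path $\alpha = [i_1 i_2][i_2 i_3]\cdots[i_{k} i_{k+1}]$ in $E_A^*$, the generator $s_\alpha s_\alpha^*$ maps under $\phi$ to a product that telescopes to $S_{i_1} S_{i_2} \cdots S_{i_k} P_{i_{k+1}} S_{i_k}^* \cdots S_{i_1}^*$, which lies in $\DD(\OO_A)$; conversely every spanning element $S_{i_1}\cdots S_{i_k} S_{i_k}^* \cdots S_{i_1}^*$ of $\DD(\OO_A)$ is hit, using the decomposition $S_i = \sum_j T_{[ij]}$ and continuity. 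Thus $\phi(\DD(C^*(E_A))) = \DD(\OO_A)$.

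The main obstacle I anticipate is injectivity: the algebraic relations alone do not force $\phi$ to be injective, so the argument genuinely depends on the gauge-invariant uniqueness theorem and on correctly translating Cuntz and Krieger's Condition (I) into Condition (L) for $E_A$. Once that translation is in hand (irreducible plus non-permutation $\Rightarrow$ every cycle has an exit), the remaining verifications are routine relation-checking and telescoping computations.
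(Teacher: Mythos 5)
Your proposal is correct, and it is more self-contained than the paper's argument. The paper defines $\phi$ on generators by exactly your formulas, $\phi(p_i) = S_iS_i^*$ and $\phi(s_{[ij]}) = S_iS_jS_j^*$, but then simply cites (the proof of) \cite[Proposition 4.1]{MannRaeburnSutherland} for the fact that $\phi$ is an isomorphism; the only thing the paper verifies by hand is the diagonal statement, via the same telescoping computation $\phi(s_\alpha) = S_{i_1}S_{i_2}\cdots S_{i_k}S_{i_k}^*$ that you describe. Your route replaces the citation with the standard three-step argument: relation-checking plus the universal property to get $\phi$, surjectivity from $S_i = S_iS_i^*S_i = \sum_{j} A(i,j)\, T_{[ij]}$, and injectivity from the gauge-invariant uniqueness theorem. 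All three steps are sound, and your relation checks (e.g.\ $T_{[ij]}^*T_{[ij]} = P_j$ using $A(i,j)=1$, and $\sum_{j : A(i,j)=1} T_{[ij]}T_{[ij]}^* = S_iS_i^*S_iS_i^* = P_i$) go through. What your approach buys is independence from the external reference; what the paper's citation buys is brevity, since Mann--Raeburn--Sutherland carry out essentially the argument you sketch.

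Two minor points. First, the gauge-invariant uniqueness theorem does not require Condition (L) at all --- it needs only equivariance of $\phi$ for the canonical circle actions (which holds, since $\phi(s_{[ij]}) = S_iS_jS_j^*$ is homogeneous of degree $1$) and $\phi(p_i) = S_iS_i^* \neq 0$. Your translation of Condition (I) into Condition (L) for $E_A$ is correct (irreducible plus non-permutation forces an exit for every cycle), but it is only needed if you instead invoke the Cuntz--Krieger uniqueness theorem; as written, that part of your argument is superfluous rather than wrong. Second, for surjectivity onto the diagonal, the clean statement is the one the paper makes: a spanning element $S_{i_1}\cdots S_{i_k}S_{i_k}^*\cdots S_{i_1}^*$ is either zero (when the word $i_1\cdots i_k$ is not admissible, by $S_iS_j = A(i,j)S_iS_j$) or equals $\phi(s_\alpha s_\alpha^*)$ for the corresponding path $\alpha$ in $E_A$; your appeal to ``the decomposition $S_i = \sum_j T_{[ij]}$ and continuity'' works but is more roundabout than necessary.
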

\begin{proof}
Define $\phi$ on generators by 
\begin{align*}
 \phi(p_{i}) &= S_i S_i^*, \\
 \phi(s_{[ij]}) &= S_i S_j S_j^*. 
\end{align*}
By (the proof of) \cite[Proposition 4.1]{MannRaeburnSutherland}, $\phi$ is an isomorphism. 

Given a path $\alpha = [i_1 i_2] [i_2 i_3] \ldots [i_{k-1} i_k]$ in $E_A$ we see that 
\begin{align*}
  \phi(s_\alpha) &= (S_{i_1} S_{i_2} S_{i_2}^*) (S_{i_2} S_{i_3} S_{i_3}^*) \cdots (S_{i_{k-1}} S_{i_k} S_{i_k}^*) \\
	       &= S_{i_1} (S_{i_2} S_{i_2}^* S_{i_2}) (S_{i_3} S_{i_3}^* S_{i_3}) \cdots (S_{i_{k-1}} S_{i_{k-1}}^* S_{i_{k-1}}) S_{i_k} S_{i_k}^* \\
	       &= S_{i_1} S_{i_2} \cdots S_{i_k} S_{i_k}^*.
\end{align*}
Hence
\[
  \phi(s_\alpha s_\alpha^*) = S_{i_1} S_{i_2} \cdots S_{i_k} S_{i_k}^* S_{i_k} S_{i_k}^* S_{i_{k-1}}^* \cdots S_{i_1}^* = S_{i_1} S_{i_2} \cdots S_{i_k} S_{i_k}^* S_{i_{k-1}}^* \cdots S_{i_1}^*.
\]
From this, it follows that $\phi(\DD(C^*(E_A))) \subseteq \DD(\OO_A)$. 
Since $S_i S_j \neq 0$ if and only if $[ij]$ is an edge in $E_A$, the computation above  also shows that $\phi$ maps $\DD(C^*(E_A))$ onto $\DD(\OO_A)$.
\end{proof}

\begin{theorem}[{cf.\ \cite[Theorem 3.6]{MatsumotoMatui}}] \label{thm: matsumoto matui for graphs}
Let $E,F$ be finite, essential, strongly connected graphs that satisfy Condition (L).
If there exists an isomorphism $\phi \colon C^*(E) \to C^*(F)$ such that $\phi(\DD(C^*(E))) = \DD(C^*(F))$, then 
\[
  \sgn(\det (I - A_E)) = \sgn( \det(I - A_F)).
\]
\end{theorem}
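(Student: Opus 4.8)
The plan is to reduce the statement to the case of Cuntz--Krieger algebras of $\{0,1\}$ matrices, where Matsumoto and Matui's theorem applies directly, and then to transport the conclusion back using Lemma \ref{lem:CKisGraph}. The only real work lies in passing from the vertex matrices $A_E, A_F$, which may have entries larger than $1$ (for instance $A_{E_2} = (2)$), to genuine $\{0,1\}$ matrices, while keeping track both of the diagonal and of the sign of $\det(I - A_E)$.

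First I would replace $E$ by its edge graph (dual graph) $\hat E$, with $\hat E^0 = E^1$, $\hat E^1 = E^2$, and $s(ef) = e$, $r(ef) = f$. Its adjacency matrix is the edge matrix $M_E$ given by $M_E(e,f) = 1$ if $r(e) = s(f)$ and $0$ otherwise, which is a $\{0,1\}$ matrix. Since $E$ is finite and essential, the partial isometries $\{s_e\}_{e \in E^1}$ satisfy $s_e^* s_e = p_{r(e)} = \sum_{s(f) = r(e)} s_f s_f^*$ and $\sum_e s_e s_e^* = 1$, so they form a Cuntz--Krieger $M_E$-family generating $C^*(E)$; equivalently $C^*(E) \cong C^*(\hat E)$ via the standard dual graph isomorphism, which sends each $s_\alpha s_\alpha^*$ to the corresponding range projection and hence restricts to a bijection $\DD(C^*(E)) \to \DD(C^*(\hat E))$. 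I would then check that $M_E$ is irreducible and satisfies Condition (I): irreducibility follows from strong connectivity of $E$, and $M_E$ fails to be a permutation matrix because otherwise every vertex of $E$ would emit and receive exactly one edge, forcing $E$ to be a single cycle and contradicting Condition (L). Observing that the graph $E_{M_E}$ associated to $M_E$ in Lemma \ref{lem:CKisGraph} is exactly $\hat E$, that lemma then supplies a diagonal-preserving isomorphism $C^*(\hat E) \cong \OO_{M_E}$, and composing gives a diagonal-preserving isomorphism $C^*(E) \cong \OO_{M_E}$; likewise $C^*(F) \cong \OO_{M_F}$.

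Next I would record the determinant identity $\det(I - A_E) = \det(I - M_E)$. Writing $C$ for the $E^0 \times E^1$ source-incidence matrix and $D$ for the $E^1 \times E^0$ range-incidence matrix, one has $A_E = CD$ and $M_E = DC$, so Sylvester's determinant identity $\det(I - CD) = \det(I - DC)$ gives the claim on the nose, not merely up to sign. In particular $\sgn(\det(I - A_E)) = \sgn(\det(I - M_E))$, and similarly for $F$.

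Finally I would assemble the pieces: conjugating the hypothesized diagonal-preserving isomorphism $\phi$ by the two isomorphisms above yields a diagonal-preserving isomorphism $\OO_{M_E} \to \OO_{M_F}$ between Cuntz--Krieger algebras of irreducible $\{0,1\}$ matrices satisfying Condition (I). Matsumoto and Matui's theorem \cite[Theorem 3.6]{MatsumotoMatui} then gives $\sgn(\det(I - M_E)) = \sgn(\det(I - M_F))$, which together with the determinant identity yields $\sgn(\det(I - A_E)) = \sgn(\det(I - A_F))$. The step I expect to require the most care is verifying that the dual graph isomorphism $C^*(E) \cong C^*(\hat E)$ genuinely carries $\DD(C^*(E))$ onto $\DD(C^*(\hat E))$ and that $M_E$ meets all the standing hypotheses of \cite[Theorem 3.6]{MatsumotoMatui}; the determinant identity and the remaining verifications are routine.
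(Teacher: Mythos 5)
Your proposal is correct, but it reaches Matsumoto--Matui by a genuinely different route than the paper. The paper removes multiple edges by performing a complete out-splitting (partitioning each $s^{-1}(v)$ into singletons), invokes \cite[Corollary 6.2]{BrownloweCarlsenWhittaker} to get diagonal-preserving isomorphisms $C^*(E) \cong C^*(E_1)$ and $C^*(F) \cong C^*(F_1)$, applies Lemma \ref{lem:CKisGraph} to the vertex matrices $A_{E_1}, A_{F_1}$, and then transfers the sign back from $A_{E_1}$ to $A_E$ via symbolic dynamics: out-splitting gives a conjugate edge shift (\cite[Theorem 2.4.10]{LindMarcus}), conjugacy implies flow equivalence, and the sign of $\det(I-A)$ is a flow invariant by \cite{ParrySullivan}. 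You instead pass to the dual graph $\hat{E}$, whose adjacency matrix is the $\{0,1\}$ edge matrix $M_E$, and your two key verifications are sound: $M_E$ is irreducible by strong connectivity and is not a permutation matrix (using essentiality to identify row and column sums of $M_E$ with out- and in-degrees of vertices, so a permutation matrix would force a single cycle, contradicting Condition (L)); and $\det(I-A_E) = \det(I - M_E)$ exactly, via Sylvester's identity applied to the factorizations $A_E = CD$, $M_E = DC$ through the source and range incidence matrices. Your approach buys an exact determinant identity by pure linear algebra, making the argument self-contained on the dynamical side (no appeal to conjugacy, flow equivalence, or \cite{ParrySullivan}), whereas the paper outsources the diagonal-preservation bookkeeping to \cite{BrownloweCarlsenWhittaker} and you must do it by hand: the one point you rightly flag as needing care --- that the dual graph isomorphism carries $\DD(C^*(E))$ onto $\DD(C^*(\hat{E}))$ --- is not quite free, since the canonical surjection $\OO_{M_E} \to C^*(E)$, $S_e \mapsto s_e$, needs an injectivity argument (Cuntz--Krieger uniqueness using the Condition (I) you verified, or the gauge-invariant uniqueness theorem), after which the computation identifying diagonals is the same calculation as in the proof of Lemma \ref{lem:CKisGraph}, with $p_v = \sum_{s(e)=v} s_e s_e^*$ handling the vertex projections. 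With that point written out, your proof is complete and arguably more elementary than the paper's.
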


\begin{proof}
In order to apply the result from \cite{MatsumotoMatui}, out-splitting will be used to replace $E$ and $F$ by graphs without multiple edges (see \cite[Section 3]{BatesPask}). 
For each $v \in E^0$, partition $s^{-1}(v)$ into singleton sets. Note that these partitions are proper, and let $E_1$ be the graph obtained from the corresponding out-splitting.
Note that since $E$ is essential, strongly connected and satisfies Condition (L), $E_1$ will be essential, strongly connected and will satisfy Condtion (L).   
Analogously, let $F_1$ be the graph obtained from $F$ by such a complete out-splitting.
By \cite[Corollary 6.2]{BrownloweCarlsenWhittaker}, there exist diagonal preserving isomorphisms $C^*(E) \to C^*(E_1)$ and $C^*(F) \to C^*(F_1)$.
Hence, there also exists a diagonal preserving isomorphism from $C^*(E_1)$ to $C^*(F_1)$.
 
Since $E_1$ and $F_1$ have no multiple edges, satisfy Condition (L), are essential, and strongly connected, $A_{E_1}$ and $A_{F_1}$ are  irreducible $\{0,1\}$ matrices that satisfy Condition (I). 
By Lemma \ref{lem:CKisGraph}, $C^*(E_1) \cong \OO_{A_{E_1}}$ and $C^*(F_1) \cong \OO_{A_{F_1}}$ in a diagonal preserving way, so the argument above guarantees the existence of a diagonal preserving isomophism from $C^*(\OO_{A_{E_1}})$ to $C^*(\OO_{A_{F_1}})$, and hence, it follows from \cite[Theorem 3.6]{MatsumotoMatui} that $\sgn(\det (I - A_{E_1})) = \sgn( \det(I - A_{F_1}))$.
For edge shifts, out-splitting produces a shift space conjugate to the original (see e.g.\ \cite[Theorem 2.4.10]{LindMarcus}). Since flow equivalence is a coarser equivalence relation than conjugacy, this implies that the edge shift of $E$ is flow equivalent to the edge shift of $E_1$. Clearly, the same relation exists between $F$ and $F_1$. By \cite{ParrySullivan}, the sign of the determinant is an invariant of flow equivalence, so  
\[
 \sgn(\det (I - A_E)) = \sgn(\det (I - A_{E_1})) = \sgn( \det(I - A_{F_1})) = \sgn( \det(I - A_F)). \qedhere
\]
\end{proof}

\subsection{Leavitt path algebras over certain subrings of $\C$}\label{subsection:leavitt}

In this section, we will prove an algebraic analogue of Theorem \ref{thm: matsumoto matui for graphs}, i.e.\ a weak algebraic version of Matsumoto and Matui's result. 
We will look only at rings $R$ that are subrings of $\C$ closed under complex conjugation, and we will equip $L_R(E)$ with a \emph{conjugate} linear involution.  
In the following, $\Z$ will be the main example of such a ring.
First, we formalize the connection between $L_{R}(E)$ and $C^*(E)$.

\begin{lemma} \label{lem: LZE into CE}
Let $E$ be a graph and let $R$ be a subring of $\C$ closed under complex conjugation. 
The map $\iota_E \colon L_{R}(E) \to C^*(E)$
given on generators by

\begin{align*}
  \iota_E(v) &= p_v, \quad \text{ for } v \in E^0, \\
  \iota_E(e) &= s_e, \quad \text{ for } e \in E^1.
\end{align*}
extends to a $*$-algebra embedding. 
\end{lemma}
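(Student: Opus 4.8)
The plan is to build $\iota_E$ from the universal property of $L_R(E)$ and then establish injectivity with a graded uniqueness theorem, so the work splits into a routine ``well-definedness'' part and a genuinely substantive ``injectivity'' part.

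First I would verify that the proposed assignment respects the defining relations of Definition~\ref{def:lpa}, so that it extends to an $R$-algebra homomorphism. Regarding $C^*(E)$ as an $R$-algebra through the inclusion $R \subset \C$, set $\iota_E(v) = p_v$, $\iota_E(e) = s_e$ and $\iota_E(e^*) = s_e^*$. The projections $\{p_v\}$ are pairwise orthogonal idempotents, and relations (i) and (ii) of Definition~\ref{def:lpa} are literally relations (i) and (ii) in the definition of $C^*(E)$. For relation (iii) one computes $s_e p_{r(e)} = s_e s_e^* s_e = s_e$ using $s_e^* s_e = p_{r(e)}$, and $p_{s(e)} s_e = s_e s_e^* s_e = s_e$ using $s_e s_e^* \le p_{s(e)}$; relation (iv) is its adjoint. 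Finally, relation (v) of Definition~\ref{def:lpa} coincides with relation (iv) in the definition of $C^*(E)$. The universal property then produces an $R$-algebra homomorphism $\iota_E \colon L_R(E) \to C^*(E)$ with $\iota_E(\alpha\beta^*) = s_\alpha s_\beta^*$.

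Next I would check that $\iota_E$ is a $*$-homomorphism. Since the involution on $L_R(E)$ is conjugate linear and agrees with $\alpha\beta^* \mapsto \beta\alpha^*$ on the spanning elements, while the adjoint on $C^*(E)$ is likewise conjugate linear and sends $s_\alpha s_\beta^*$ to $s_\beta s_\alpha^*$, the identity $\iota_E(x^*) = \iota_E(x)^*$ holds on the spanning set and hence everywhere by $R$-linearity.

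The main obstacle is injectivity, and here I would invoke the Graded Uniqueness Theorem of \cite{TomfordeLeavittOverRing}. Recall that $L_R(E)$ is $\Z$-graded with $\deg(v) = 0$, $\deg(e) = 1$ and $\deg(e^*) = -1$. To obtain a genuine $\Z$-grading on the target, let $\gamma \colon \T \to \operatorname{Aut}(C^*(E))$ be the gauge action determined by $\gamma_z(p_v) = p_v$ and $\gamma_z(s_e) = z s_e$, and for $n \in \Z$ put
\[
  B_n = \{ x \in C^*(E) \mid \gamma_z(x) = z^n x \text{ for all } z \in \T \}.
\]
Then $\BB = \bigoplus_{n \in \Z} B_n$ is a $\Z$-graded $*$-subalgebra of $C^*(E)$ containing the image of $\iota_E$, and $\iota_E \colon L_R(E) \to \BB$ is a graded ring homomorphism. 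The hypothesis to verify is that $\iota_E(r v) = r p_v \neq 0$ for every $v \in E^0$ and every nonzero $r \in R$; this holds because $p_v$ is a nonzero projection in the $C^*$-algebra $C^*(E)$, whence $\|r p_v\| = |r| \neq 0$. The Graded Uniqueness Theorem would then give injectivity of $\iota_E$ and complete the argument. The one point that warrants care is the passage to the honestly $\Z$-graded subring $\BB$, since $C^*(E)$ itself is only topologically graded; this is exactly what makes the algebraic graded uniqueness theorem applicable.
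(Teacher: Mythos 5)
Your proposal is correct and follows essentially the same route as the paper: the paper's proof also obtains $\iota_E$ from the universal property of $L_R(E)$ and deduces injectivity from Tomforde's Graded Uniqueness Theorem, citing \cite[Theorem 7.3]{TomfordeUniquenessTheorems} for exactly the device you spell out, namely passing via the gauge action to the honestly $\Z$-graded $*$-subalgebra of $C^*(E)$ and checking $r p_v \neq 0$ for nonzero $r \in R$. Your write-up simply makes explicit the relation-checking and the grading argument that the paper delegates to its references.
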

\begin{proof}
By the universal property of $L_{R}(E)$,
$\iota_E$ extends to a $*$-homomorphism. 
It is injective by the Graded Uniqueness Theorem (\cite[Theorem 5.3]{TomfordeLeavittOverRing}), see \cite[Theorem 7.3]{TomfordeUniquenessTheorems} for details on how to apply the Graded Uniqueness Theorem. 
\end{proof}

We now provide a version of \cite[Theorem 4.4]{AbramsTomforde} for Leavitt path algebras over subrings of $\C$
(the result in \cite{AbramsTomforde} is for Leavitt path algebras over $\C$).
This version of the result additionally tracks the image of the diagonal.

\begin{lemma} \label{lem: extending diagonal iso}
Let $R$ be a subring of $\C$ closed under complex conjugation and
let $E,F$ be finite graphs that satisfy Condition (L). 
If $\phi \colon L_R(E) \to L_R(F)$ 
is a $*$-isomorphism such that $\phi(\DD(L_R(E))) \subseteq \DD(L_R(F))$,
then $\phi$ extends to an isomorphism $\overline{\phi} \colon C^*(E) \to C^*(F)$ such that $\overline{\phi}(\DD(C^*(E))) = \DD(C^*(F))$.  
\end{lemma}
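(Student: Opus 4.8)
The goal is to extend a diagonal-compatible $*$-isomorphism $\phi \colon L_R(E) \to L_R(F)$ to a diagonal-preserving isomorphism $\overline{\phi} \colon C^*(E) \to C^*(F)$. The natural strategy is to build $\overline{\phi}$ by specifying it on the canonical generators $\{p_v\}$ and $\{s_e\}$ of $C^*(E)$, using the composite $\iota_F \circ \phi$ from Lemma \ref{lem: LZE into CE} to read off where these should go, and then invoke the universal property of $C^*(E)$. Concretely, I would set $\overline{\phi}(p_v) = \iota_F(\phi(v))$ and $\overline{\phi}(s_e) = \iota_F(\phi(e))$ for $v \in E^0$ and $e \in E^1$. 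The content of the lemma is then: (a) these images form a valid Cuntz--Krieger $E$-family inside $C^*(F)$, so $\overline{\phi}$ exists; (b) $\overline{\phi}$ restricts to $\iota_F \circ \phi$ on the dense $*$-subalgebra $L_R(E) \subset C^*(E)$, so it is genuinely an extension; (c) $\overline{\phi}$ is an isomorphism; and (d) it carries $\DD(C^*(E))$ onto $\DD(C^*(F))$.

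\textbf{Carrying out the steps.}
For (a), I would check that $\{\iota_F(\phi(v))\}$ are mutually orthogonal projections and $\{\iota_F(\phi(e))\}$ are partial isometries satisfying relations (i)--(iv) of the graph $C^*$-algebra. Since $\phi$ is a $*$-algebra homomorphism and the generators of $L_R(E)$ satisfy the analogous Leavitt relations of Definition \ref{def:lpa}, and since $\iota_F$ is a $*$-homomorphism, the images automatically satisfy the corresponding $C^*$-relations; the only subtlety is relation (iii), $s_e s_e^* \le p_{s(e)}$, which follows because $\iota_F(\phi(e)\phi(e)^*)$ and $\iota_F(\phi(s(e)))$ are honest projections in the $C^*$-algebra $C^*(F)$ and their difference is a projection by the Leavitt relations. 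The universal property of $C^*(E)$ then yields $\overline{\phi}$. Step (b) is immediate by construction: $\overline{\phi}$ and $\iota_F \circ \phi$ agree on generators of $L_R(E)$, hence on all of $L_R(E)$. For (d), the diagonal of $C^*(E)$ is the closed linear span of the $s_\alpha s_\alpha^*$, and $\overline{\phi}(s_\alpha s_\alpha^*) = \iota_F(\phi(\alpha\alpha^*))$; since $\phi(\DD(L_R(E))) \subseteq \DD(L_R(F))$ and $\iota_F$ sends $\DD(L_R(F))$ into $\DD(C^*(F))$, continuity gives $\overline{\phi}(\DD(C^*(E))) \subseteq \DD(C^*(F))$, and applying the same reasoning to $\phi^{-1}$ gives the reverse inclusion.

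\textbf{The main obstacle.}
The delicate point is (c), surjectivity of $\overline{\phi}$ (injectivity being cheaper, e.g.\ via the Gauge-Invariant Uniqueness Theorem applied to $\overline{\phi}$, whose equivariance for the gauge actions must first be checked on generators). A $*$-algebra isomorphism $\phi$ need not a priori extend to a surjection of completions, since the image $\overline{\phi}(C^*(E))$ is only guaranteed to contain $\iota_F(L_R(F))$, and one must argue that this is dense in $C^*(F)$ in the appropriate sense. The cleanest route is to run the construction symmetrically: build $\overline{\psi} \colon C^*(F) \to C^*(E)$ from $\phi^{-1}$ in exactly the same way, then verify that $\overline{\psi} \circ \overline{\phi}$ and $\overline{\phi} \circ \overline{\psi}$ agree with the identity on generators, forcing both to be the identity by the universal property. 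This turns the isomorphism question into a pair of existence statements, each handled by the argument in (a), and sidesteps any direct density estimate. I expect this symmetric bookkeeping, together with confirming gauge-equivariance so the uniqueness theorem applies, to be the part requiring the most care; the relation-checking in (a) and the diagonal-tracking in (d) should be routine given the earlier lemmas.
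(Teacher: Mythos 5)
Your construction of $\overline{\phi}$ on generators via $\iota_F \circ \phi$, the verification of the Cuntz--Krieger relations (including the observation that relation (iii) comes for free once the images are projections in a $C^*$-algebra), and the symmetric construction of $\overline{\psi}$ from $\phi^{-1}$ so that both composites fix the generators and hence equal the identity, is essentially the argument of \cite[Theorem 4.4]{AbramsTomforde} that the paper invokes, and that part is correct. One side remark: your proposed detour through the Gauge-Invariant Uniqueness Theorem is not just unnecessary (injectivity already follows from $\overline{\psi} \circ \overline{\phi} = \mathrm{id}$) but unavailable --- $\phi$ is an arbitrary $*$-isomorphism with no reason to respect the $\Z$-grading of the Leavitt path algebras, so $\overline{\phi}$ need not intertwine the gauge actions, and the ``equivariance check on generators'' you defer would in general fail.

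The genuine gap is in your step (d). The hypothesis is only the one-sided inclusion $\phi(\DD(L_R(E))) \subseteq \DD(L_R(F))$; nothing gives you $\phi^{-1}(\DD(L_R(F))) \subseteq \DD(L_R(E))$, so ``applying the same reasoning to $\phi^{-1}$'' is not a symmetric instance of what you proved --- it presupposes exactly the equality the lemma asserts. If the algebraic inclusion were strict, your argument as written would produce nothing to rule that out. A warning sign is that your proof never uses Condition (L), and this is precisely where it enters in the paper: by \cite{HopenwasserPetersPower} and \cite{NagyReznikoff}, Condition (L) guarantees that $\DD(C^*(E))$ and $\DD(C^*(F))$ are MASAs in their respective algebras. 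Since $\overline{\phi}$ is an isomorphism, $\overline{\phi}(\DD(C^*(E)))$ is a MASA of $C^*(F)$, and your inclusion computation places it inside the abelian subalgebra $\DD(C^*(F))$; maximality then forces $\overline{\phi}(\DD(C^*(E))) = \DD(C^*(F))$. So keep your forward inclusion exactly as is, but replace the appeal to $\phi^{-1}$ with this maximality argument (one could alternatively use that $\DD(L_R(F))$ is maximal abelian algebraically under Condition (L), but some maximality input is unavoidable).
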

\begin{proof}
As in the proof of \cite[Theorem 4.4]{AbramsTomforde}, $\phi$ extends to an isomorphism $\overline{\phi} \colon C^*(E) \to C^*(F)$ satisfying $\overline{\phi} \circ \iota_{E} = \iota_F \circ \phi$.
Hence,
\begin{align*}
  \overline{\phi}(\DD(C^*(E)))	&= \overline{\newspan}_\C\left\{\overline{\phi}(s_\alpha s_\alpha^*) \mid \alpha \in E^*\right\} \\
				&= \overline{\newspan}_\C\left\{\overline{\phi}(\iota_E(\alpha \alpha^*)) \mid \alpha \in E^*\right\} \\
				&= \overline{\newspan}_\C\left\{\iota_F(\phi(\alpha \alpha^*)) \mid \alpha \in E^*\right\} \\
				&= \overline{\newspan}_\C\left\{\iota_F(\phi(\newspan_R\{\alpha \alpha^* \mid \alpha \in E^* \} ))\right\} \\
				&= \overline{\newspan}_\C\left\{\iota_F(\phi(\DD(L_R(E))))\right\} \\
				&\subseteq \overline{\newspan}_\C\left\{\iota_F(\DD(L_R(F)))\right\} \\
				&=  \overline{\newspan}_\C\{ \newspan_R \{ s_\alpha s_\alpha^* \mid \alpha \in F^* \}\} \\
				&= \overline{\newspan}_\C\{ s_\alpha s_\alpha^* \mid \alpha \in F^* \} \\
				&= \DD(C^*(F)).	
\end{align*}
Since $\overline{\phi}$ is an isomorphism and $\DD(C^*(E))$ is a MASA in $C^*(E)$, 
$\overline{\phi}(\DD(C^*(E)))$ is a MASA in $C^*(F)$.
As $\DD(C^*(F))$ is also a MASA in $C^*(F)$, it follows that $\overline{\phi}(\DD(C^*(E))) = \DD(C^*(F))$.
\end{proof}

\begin{theorem} \label{thm: algebraic matsumoto matui}
Let $E,F$ be finite, essential, strongly connected graphs that satisfy Condition (L), and let $R$ be a subring of $\C$ closed under complex conjugation.
If $\phi \colon L_R(E) \to L_R(F)$
is a $*$-isomorphism such that $\phi(\DD(L_R(E))) \subseteq \DD(L_R(F))$,
then 
\[
  \sgn(\det (I - A_E)) = \sgn( \det(I - A_F)).
\]
\end{theorem}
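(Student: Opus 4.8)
The plan is to reduce this algebraic statement to the $C^*$-algebraic version already established in Theorem \ref{thm: matsumoto matui for graphs}. The hypotheses here are precisely those of that theorem, with the crucial difference that we are given a diagonal-preserving (in the weaker sense of $\subseteq$) $*$-isomorphism of Leavitt path algebras over $R$, rather than of graph $C^*$-algebras. The whole point of the preceding two lemmas is to bridge this gap, so the proof should be short.

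First I would invoke Lemma \ref{lem: extending diagonal iso}. Since $E$ and $F$ are finite and satisfy Condition (L), and since $\phi \colon L_R(E) \to L_R(F)$ is a $*$-isomorphism with $\phi(\DD(L_R(E))) \subseteq \DD(L_R(F))$, that lemma produces an extension $\overline{\phi} \colon C^*(E) \to C^*(F)$ which is an isomorphism satisfying the \emph{exact} diagonal-preservation condition $\overline{\phi}(\DD(C^*(E))) = \DD(C^*(F))$. Note that the lemma upgrades the one-sided containment at the Leavitt level to an equality at the $C^*$ level, using that the diagonals are MASAs when Condition (L) holds; this is exactly the input required by the $C^*$-theorem.

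Next I would apply Theorem \ref{thm: matsumoto matui for graphs} directly to $\overline{\phi}$. The graphs $E$ and $F$ are assumed finite, essential, strongly connected, and satisfying Condition (L), which are precisely the hypotheses of that theorem, and we have just produced the required diagonal-preserving isomorphism $\overline{\phi}$ between $C^*(E)$ and $C^*(F)$. The conclusion $\sgn(\det(I - A_E)) = \sgn(\det(I - A_F))$ then follows immediately.

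I do not anticipate a genuine obstacle in this argument, since the real work has been front-loaded into Lemma \ref{lem: extending diagonal iso} (extending the isomorphism to the $C^*$-completion and promoting the diagonal containment to an equality via the MASA property) and into Theorem \ref{thm: matsumoto matui for graphs} (out-splitting to remove multiple edges, identification with Cuntz--Krieger algebras via Lemma \ref{lem:CKisGraph}, and the appeal to Matsumoto--Matui together with flow-equivalence invariance of the sign of the determinant). The only point deserving care is to confirm that the hypotheses transfer cleanly: Condition (L) and finiteness feed Lemma \ref{lem: extending diagonal iso}, while essential and strongly connected (together with non-triviality being subsumed, as these graphs have multiple edges at each vertex) feed Theorem \ref{thm: matsumoto matui for graphs}. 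Once that bookkeeping is verified, the proof is a two-line composition of the cited results.
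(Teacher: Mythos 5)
Your proposal is correct and follows exactly the paper's own proof: invoke Lemma \ref{lem: extending diagonal iso} to extend $\phi$ to a diagonal-preserving isomorphism $\overline{\phi} \colon C^*(E) \to C^*(F)$, then apply Theorem \ref{thm: matsumoto matui for graphs}. Your hypothesis bookkeeping is also sound (and the aside about non-triviality is harmless, since Theorem \ref{thm: matsumoto matui for graphs} does not require it).
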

\begin{proof}
By Lemma \ref{lem: extending diagonal iso}, $\phi$ extends to a diagonal preserving isomorphism $\overline{\phi} \colon C^*(E) \to C^*(F)$, so the result follows from Theorem \ref{thm: matsumoto matui for graphs}. 
\end{proof}

%\section{Unitaries in $L_{\Z}(E)$}
%In this section, we show that all unitaries in $L_{\Z}(E)$ can be written in a certain standard from when $E$ is a finite graph, see Proposition \ref{prop: standard form unitaries}.
\section{Unitaries in $L_{R}(E)$}
In this section, we show that all unitaries in $L_{R}(E)$ can be written in a certain standard from when $E$ is a finite graph and $R$ is \nice\ (see Proposition \ref{prop: standard form unitaries}).
This is similar to what is done in \cite{BS} for unitaries in $L_{\Z}(E_2)$, and the authors are grateful to Chris Smith for showing them how to generalize the arguments given there.
A particularly interesting feature of Smith's argument is that it eschews the presentation of $L_{\Z}(E)$ as endomorphisms on an uncountably generated free module used in \cite{BS} in favor of techniques based on the grading of Leavitt path algebras.  
We generalize Smith's argument to cover all rings that have an \eup.

We first note a simple way to find unitaries in $L_{R}(E)$.

\begin{lemma} \label{lem: orthogonal paths}
Let $E$ be a finite graph and let $R$ be a unital commutative ring with characteristic $0$. 
If $\alpha_1, \alpha_2, \ldots, \alpha_n \in E^*$ satisfy 
\[
 \sum_{i=1}^n \alpha_i \alpha_i^* = 1,
\]
then $\alpha_i^* \alpha_j = 0$ for $i \neq j$. 
\end{lemma}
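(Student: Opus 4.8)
The plan is to reduce the statement to a purely combinatorial property of the paths and then to exploit the characteristic-$0$ hypothesis through a single evaluation functional. The starting observation is that for $i \neq j$ the element $\alpha_i^* \alpha_j$ vanishes \emph{precisely} when $\alpha_i$ and $\alpha_j$ are incomparable, i.e.\ neither is an initial segment of the other: if $\alpha_j = \alpha_i \gamma$ then $\alpha_i^* \alpha_j = \gamma$, if $\alpha_i = \alpha_j \delta$ then $\alpha_i^* \alpha_j = \delta^*$, and if they first disagree at some edge then $\alpha_i^* \alpha_j = 0$; in the comparable (including equal) cases the result is one of the nonzero standard spanning elements of $L_R(E)$. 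Thus the lemma is equivalent to the assertion that $\alpha_1, \dots, \alpha_n$ are pairwise incomparable, and I would prove this by contradiction, assuming that $\alpha_i$ is an initial segment of $\alpha_j$ for some $i \neq j$ (the case $\alpha_i = \alpha_j$ being allowed).

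Next I would multiply the partition of unity $\sum_k \alpha_k \alpha_k^* = 1$ on the left by the diagonal projection $\alpha_j \alpha_j^*$, obtaining
\[
  \alpha_j \alpha_j^* = \sum_{k=1}^n \alpha_j \alpha_j^*\, \alpha_k \alpha_k^*.
\]
Since the diagonal is commutative and the elements $\alpha\alpha^*$ behave like nested-or-disjoint cylinder projections, each summand $\alpha_j \alpha_j^* \alpha_k \alpha_k^*$ is either $0$ or equal to $\mu\mu^*$ for the longer of $\alpha_j, \alpha_k$ (when these are comparable). In particular the $k=j$ summand equals $\alpha_j\alpha_j^*$, and, because $\alpha_i$ is an initial segment of $\alpha_j$, so does the $k=i$ summand, as $\alpha_j \alpha_j^* \alpha_i \alpha_i^* = \alpha_j \alpha_j^*$. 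Hence the right-hand side contains the term $\alpha_j \alpha_j^*$ at least twice, while every remaining term is again of the form $\mu\mu^*$ or $0$.

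To turn this into a numerical contradiction I would fix a maximal path $x$ extending $\alpha_j$ — obtained by greedily prolonging $\alpha_j$ from $r(\alpha_j)$, either terminating at a sink or continuing as an infinite path — and consider the evaluation functional $\chi = \chi_x \colon \DD(L_R(E)) \to R$ determined by $\chi(\mu\mu^*) = 1$ if $\mu$ is an initial segment of $x$ and $\chi(\mu\mu^*) = 0$ otherwise. The cleanest way to see that $\chi$ is well defined is to realize it as a matrix coefficient of the natural action of $L_R(E)$ on the free $R$-module with basis the maximal paths, in which $\mu\mu^*$ acts as the projection onto the span of those maximal paths that extend $\mu$; since this is a genuine $L_R(E)$-module, $\chi$ automatically respects every relation holding in $L_R(E)$, in particular the displayed identity. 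Applying $\chi$ then yields $1 = \chi(\alpha_j \alpha_j^*) = \sum_k \chi(\alpha_j \alpha_j^* \alpha_k \alpha_k^*)$, where each term lies in $\{0,1\}$ and the terms $k=i$ and $k=j$ both equal $1$; thus $1 = m \cdot 1_R$ for some integer $m \ge 2$, so $(m-1)\cdot 1_R = 0$ with $m-1 \ge 1$, contradicting $\operatorname{char} R = 0$. This forces the paths to be pairwise incomparable and proves the lemma.

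The routine but genuinely technical heart of the argument is the well-definedness of $\chi$, i.e.\ checking that the prescribed action on basis vectors respects the relation at every vertex — equivalently that the identity $\mu\mu^* = \sum_{e \in s^{-1}(r(\mu))} (\mu e)(\mu e)^*$ is compatible with the rule ``$\mu$ is an initial segment of $x$'', which holds because $x$ traverses exactly one outgoing edge at each non-sink vertex it meets. I expect this bookkeeping to be the main obstacle. Note that the characteristic-$0$ hypothesis enters only at the very last step, to exclude $(m-1)\cdot 1_R = 0$; this is exactly what lets the argument dispense with the $C^*$-style positivity that would trivialize the analogue over $\C$.
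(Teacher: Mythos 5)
Your proof is correct, but it takes a genuinely different route from the paper's. The paper argues entirely inside $L_R(E)$: after relabeling so that $|\alpha_1| \geq \cdots \geq |\alpha_n|$, it multiplies the partition of unity on the right by $\alpha_1$, observes that each term $\alpha_i \alpha_i^* \alpha_1$ is either $0$ or the real path $\alpha_1$ itself (by maximality of $|\alpha_1|$), and invokes linear independence of the real paths (\cite[Proposition 4.9]{TomfordeLeavittOverRing}) together with $\operatorname{char} R = 0$ to kill all terms with $i \neq 1$; it then inducts down the list, always cutting with the longest surviving path. You instead reduce the lemma to pairwise incomparability, cut with the diagonal projection $\alpha_j\alpha_j^*$ of the \emph{longer} path in a comparable pair, and extract a numerical contradiction via an evaluation functional realized as a matrix coefficient of the representation of $L_R(E)$ on the free $R$-module over maximal (boundary) paths. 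Both proofs use the characteristic-$0$ hypothesis at exactly the same point, to rule out $m \cdot 1_R = 1_R$ with $m \geq 2$. What each buys: the paper's argument is shorter and needs no new machinery beyond the quoted linear independence, but its length-ordering induction is slightly opaque; yours makes the combinatorial content transparent --- the projections $\mu\mu^*$ are nested-or-disjoint cylinder projections, and indeed your argument essentially contains Lemma~\ref{lem: disjoint cylinder sets} as a byproduct --- at the cost of verifying well-definedness of the module action at relation (\ref{item:summation_relation}) of Definition~\ref{def:lpa}, which you correctly identify as the technical heart. It is worth noting that this module is precisely the kind of uncountably generated free-module presentation from \cite{BS} that the paper remarks Smith's techniques were designed to avoid, so you are in effect reverting to the older toolkit; also, you could streamline your last step by applying the identity $\alpha_j\alpha_j^* = \sum_k \alpha_j\alpha_j^*\alpha_k\alpha_k^*$ directly to the basis vector $x$ and comparing coefficients of $x$, which gives $x = m x$ with $m \geq 2$ and makes the functional $\chi$ unnecessary.
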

\begin{proof}
By relabeling, we can assume that $|\alpha_1| \geq |\alpha_2| \geq \cdots \geq |\alpha_n|$.
We see that
\[
	\alpha_1 = 1 \alpha_1 = \left( \sum_{i=1}^n \alpha_i \alpha_i^* \right) \alpha_1 = \sum_{i=1}^n \alpha_i \alpha_i^* \alpha_1.
\]
The first term in the sum on the right hand side is $\alpha_1$, and since $\alpha_1$ has maximal length among the $\alpha_i$, each subsequent term is either a real path or equals $0$. 
Since the real paths are linearly independent (\cite[Proposition 4.9]{TomfordeLeavittOverRing}) and $R$ has characteristic $0$, it follows that $ \alpha_i \alpha_i^* \alpha_1 = 0$, for each $i \neq 1$. 
Hence, $\alpha_i^* \alpha_1 = 0$. 
By taking adjoints, $\alpha_1^* \alpha_i = 0$ for $i = 2, 3, \ldots, n$.

Let $k = 2, 3, \ldots, n$ be given, and assume that $\alpha_i^* \alpha_k = 0$ for all $i < k$.
Then
\[
	\alpha_k = 1 \alpha_k = \sum_{i=1}^n \alpha_i \alpha_i^* \alpha_k = \sum_{i=k}^n \alpha_i \alpha_i^* \alpha_k.
\]   
Since $\alpha_k$ has maximal length among the $\alpha_i$ appearing in the sum on the right hand side, we see as above that $\alpha_i^* \alpha_k = 0 = \alpha_k^* \alpha_i$ for $i = k+1, k+2, \ldots, n$. 
By induction, $\alpha_i^* \alpha_j = 0$ all $i \neq j$. 
\end{proof}

This gives us a simple way to find unitaries in $L_{R}(E)$:

\begin{lemma} \label{lem: certain unitaries}
Let $E$ be a finite graph and let $R$ be a unital commutative ring with characteristic $0$. 
Suppose that $\alpha_i, \beta_i \in E^*$ and $\lambda_i \in R$, $i=1,2,\ldots,n$, are such that
\begin{enumerate}
	\item $\sum_{i=1}^n \alpha_i \alpha_i^* = 1$, \label{item:certain unitaries 1} 
	\item $\sum_{i=1}^n \beta_i \beta_i^* = 1$, \label{item:certain unitaries 2}
\end{enumerate}
and (if $L_{R}(E)$ is equipped with a linear involution)
\begin{enumerate}
\setcounter{enumi}{2}	
\item $\lambda_i^2 = 1$ for all $i$  \label{item:certain unitaries 3}
\end{enumerate}
or (if $L_{R}(E)$ is equipped with a conjugate linear involution)
\begin{enumerate}
\setcounter{enumi}{2}	
\item $\lambda_i \overline{\lambda_i} = 1$ for all $i$. \label{item:certain unitaries 3'}
\end{enumerate}
Then 
\[
	u = \sum_{i=1}^n \lambda_i \alpha_i \beta_i^*,
\]	
is a unitary.
\end{lemma}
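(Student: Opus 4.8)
The plan is to verify directly that $uu^* = 1 = u^*u$, reducing everything to the two partition hypotheses via the orthogonality supplied by Lemma \ref{lem: orthogonal paths}. First I would apply that lemma twice: from \eqref{item:certain unitaries 1} it gives $\alpha_i^*\alpha_j = 0$ for $i \neq j$, and from \eqref{item:certain unitaries 2} it gives $\beta_i^*\beta_j = 0$ for $i \neq j$. I would also record the elementary identities $\alpha_i^*\alpha_i = r(\alpha_i)$ and $\beta_i^*\beta_i = r(\beta_i)$, which follow by repeatedly applying relations (ii) and (iii) of Definition \ref{def:lpa} along a path, together with $\alpha_i r(\alpha_i) = \alpha_i$. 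Finally I would note that a term $\alpha_i\beta_i^*$ is nonzero only when $r(\alpha_i) = r(\beta_i)$, since otherwise the orthogonal idempotents satisfy $r(\alpha_i)r(\beta_i) = 0$ and the term vanishes; thus, in line with the standard spanning description of $L_R(E)$, we may assume $r(\alpha_i) = r(\beta_i)$ for every $i$.

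Next I would write down the adjoint: in the conjugate linear case $u^* = \sum_i \overline{\lambda_i}\,\beta_i\alpha_i^*$ (and $u^* = \sum_i \lambda_i\,\beta_i\alpha_i^*$ in the linear case), using that the involution sends $\alpha_i\beta_i^* \mapsto \beta_i\alpha_i^*$. Then I would expand
\[
  uu^* = \sum_{i,j} \lambda_i\overline{\lambda_j}\,\alpha_i\beta_i^*\beta_j\alpha_j^*.
\]
By the orthogonality of the $\beta$'s every term with $i \neq j$ contains the factor $\beta_i^*\beta_j = 0$ and drops out, while the $i = j$ term simplifies via $\beta_i^*\beta_i = r(\beta_i) = r(\alpha_i)$ and $\alpha_i r(\alpha_i) = \alpha_i$ to $\lambda_i\overline{\lambda_i}\,\alpha_i\alpha_i^* = |\lambda_i|^2\alpha_i\alpha_i^*$. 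Condition \eqref{item:certain unitaries 3'} (respectively \eqref{item:certain unitaries 3} in the linear case, where the scalar is $\lambda_i^2$) makes this scalar equal to $1$, so $uu^* = \sum_i \alpha_i\alpha_i^* = 1$ by \eqref{item:certain unitaries 1}. The computation of $u^*u$ is symmetric: here the cross terms die because $\alpha_i^*\alpha_j = 0$ for $i \neq j$, and the diagonal collapses using $\alpha_i^*\alpha_i = r(\alpha_i) = r(\beta_i)$ and $\beta_i r(\beta_i) = \beta_i$ to give $u^*u = \sum_i \beta_i\beta_i^* = 1$ by \eqref{item:certain unitaries 2}.

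I do not expect a serious obstacle, since the content of the argument is entirely carried by Lemma \ref{lem: orthogonal paths}, which guarantees that the ``middle'' products $\beta_i^*\beta_j$ and $\alpha_i^*\alpha_j$ vanish off the diagonal. The only points requiring a little care are the range bookkeeping, namely checking that $r(\alpha_i) = r(\beta_i)$ so that $\alpha_i\beta_i^*\beta_i\alpha_i^*$ really collapses to $\alpha_i\alpha_i^*$ rather than to $\alpha_i r(\beta_i)\alpha_i^*$, and keeping the two involution conventions separated only at the final step where $\lambda_i\overline{\lambda_i}$ or $\lambda_i^2$ is set to $1$; up to that last substitution the two cases are handled by identical manipulations.
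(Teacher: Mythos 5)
Your proof is correct and is exactly the argument the paper intends: the paper's own proof consists of the single sentence that the claim ``follows from a simple computation of $uu^*$ and $u^*u$ using Lemma \ref{lem: orthogonal paths}'', and you have simply carried out that computation, with the cross terms killed by the orthogonality relations $\alpha_i^*\alpha_j = 0$ and $\beta_i^*\beta_j = 0$ for $i \neq j$. Your side remark on the range bookkeeping is apt, since the convention $r(\alpha_i) = r(\beta_i)$ (standard when writing spanning elements $\alpha\beta^*$, as in the paper's citation of \cite[Proposition 3.4]{TomfordeLeavittOverRing}) is indeed implicitly assumed in the lemma and is needed for the diagonal terms to collapse to $\alpha_i\alpha_i^*$ and $\beta_i\beta_i^*$.
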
                       
\begin{proof}
This follows from a simple computation of $uu^*$ and $u^*u$ using Lemma \ref{lem: orthogonal paths}. 
\end{proof}

In general, not all unitaries in $L_{R}(E)$ will have the form described in Lemma \ref{lem: certain unitaries}.
However, in the special case of $R = \Z$ all unitaries do in fact have this form. 
Before proving this result, we note the following simple consequence of the defining relations of a Leavitt path algebra.

\begin{remark} \label{rmk: extend paths}
Let $E$ be a finite graph.
Fix a vertex $v \in E^0$, some natural number $m$, and define 
\[
  X_{v,m} = \{ \gamma \in E^* \mid s(\gamma) = v \text{ and either } |\gamma| = m \text{ or } |\gamma| < m \text{ and } r(\gamma) \text{ is a sink} \}. 
\]
A straightforward induction argument using (\ref{item:summation_relation}) from Definition \ref{def:lpa} shows that
\[
 \sum_{\gamma \in X_{v,m}} \gamma \gamma^* = v. 
\]
Consider  
\[
 x = \sum_{i=1}^n \lambda_i \alpha_i \beta_i^*  \in L_{R}(E), 
\]
with $\lambda_i \in R$ and $r(\alpha_i) = r(\beta_i)$, and let $k = \max\{ |\beta_i| \}$.
Then
\begin{align*}
  x &= \sum_{i=1}^n \lambda_i \alpha_i \beta_i^* = \sum_{i=1}^n \lambda_i \alpha_i r(\alpha_i) \beta_i^* \\
    &= \sum_{i=1}^n \lambda_i \alpha_i \left( \sum_{\gamma \in X_{r(\alpha_i),k-|\beta_i|}} \gamma \gamma^* \right) \beta_i^* \\
    &= \sum_{i=1}^n \sum_{\gamma \in X_{r(\alpha_i),k-|\beta_i|}} \lambda_i \alpha_i \gamma (\beta_i \gamma)^*.
\end{align*}
Re-indexing the last sum, we see that 
\[
  x = \sum_{j=1}^l \hat{\lambda}_j \mu_j \nu_j^*,
\]
where each $\mu_j$ extends an $\alpha_i$, each $\nu_j$ extends a $\beta_i$, and all the $\nu_j$ have length $k$ or are shorter but end at a sink.
In particular, if $\nu_j \neq \nu_i$ then $\nu_j^* \nu_i = 0$.
\end{remark}

We are now ready to show that all unitaries in $L_{R}(E)$ can be written in the form from Lemma \ref{lem: certain unitaries} when $R$ is \nice.  
The following proposition is due to Chris Smith in the case where $R = \Z$. 

\begin{proposition} \label{prop: standard form unitaries}
Let $E$ be a finite graph, let $R$ be \nice, and let $u \in L_{R}(E)$ be a unitary.
Then there exist paths $\alpha_i, \beta_i$ such that
\[
	u = \sum_{i=1}^n \lambda_i \alpha_i \beta_i^*,
\]
with 
\begin{enumerate}
 \item $\sum_{i=1}^n \alpha_i \alpha_i^* = 1$, \label{item:standard form alpha}
 \item $\sum_{i=1}^n \beta_i \beta_i^* = 1$, and, \label{item:standard form beta} 
 \item $|\lambda_i| = 1$. \label{item:standard form lambda}
\end{enumerate}
\end{proposition}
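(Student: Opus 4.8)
The plan is to bring $u$ into a partially standardized form using Remark \ref{rmk: extend paths}, extract a scalar partition-of-unity relation by evaluating on boundary paths, apply the \eup\ hypothesis to pin down the coefficients and the $\alpha_i$, and finally read off the relation on the $\beta_i$ from $u^*u = 1$.

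First I would write $u = \sum_i \lambda_i \alpha_i \beta_i^*$ and apply Remark \ref{rmk: extend paths} to the right-hand legs, so that all the $\beta_i$ have a common length $k$ or terminate at a sink; after merging terms with identical pairs $(\alpha_i,\beta_i)$ and discarding those with $\lambda_i = 0$, I may assume that all $\lambda_i$ are nonzero and that, for each fixed value of $\beta_i$, the corresponding $\alpha_i$ are pairwise distinct. The standardization guarantees $\beta_i^*\beta_{i'} = \delta_{\beta_i,\beta_{i'}}\, r(\beta_i)$, so that
\begin{align*}
  uu^* = \sum_{i,i' \,:\, \beta_i = \beta_{i'}} \lambda_i\overline{\lambda_{i'}}\, \alpha_i \alpha_{i'}^* = 1.
\end{align*}

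Next I would pass to the representation of $L_R(E)$ on the free $R$-module with basis the boundary paths of $E$ (the infinite paths together with the finite paths ending at a sink), in which $\alpha\beta^*$ sends a boundary path $\xi$ to $\alpha\eta$ when $\xi = \beta\eta$ and to $0$ otherwise; here $1 = \sum_v v$ acts as the identity. Evaluating $uu^* = 1$ at a boundary path $\xi$ and reading off the coefficient of the basis vector $\xi$ itself, one checks that a summand $\lambda_i\overline{\lambda_{i'}}\alpha_i\alpha_{i'}^*$ can contribute to $\xi$ only when $\alpha_{i'}$ is a prefix of $\xi$ and $\alpha_i = \alpha_{i'}$, which by the distinctness arranged above forces $i = i'$. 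Hence
\begin{align*}
  \sum_{i \,:\, \alpha_i \text{ is a prefix of } \xi} |\lambda_i|^2 = 1
\end{align*}
for every boundary path $\xi$. Since all $\lambda_i$ are nonzero, the \eup\ hypothesis on $R$ forces exactly one $\alpha_i$ to be a prefix of each $\xi$, and that $\lambda_i$ satisfies $|\lambda_i| = 1$. This yields condition (\ref{item:standard form lambda}), and it shows that the cylinder sets $\ZZ(\alpha_i)$ are pairwise disjoint and cover all boundary paths; in particular the $\alpha_i$ are pairwise orthogonal. A standard consequence, e.g.\ by applying $\iota_E$ and using $\sum_i s_{\alpha_i}s_{\alpha_i}^* = 1$ in $C^*(E)$ together with the injectivity from Lemma \ref{lem: LZE into CE}, is then $\sum_i \alpha_i\alpha_i^* = 1$, which is condition (\ref{item:standard form alpha}).

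Finally, with the $\alpha_i$ now known to be pairwise orthogonal, so that $\alpha_i^*\alpha_{i'} = \delta_{ii'}\, r(\alpha_i)$, I would compute
\begin{align*}
  1 = u^*u = \sum_{i,i'} \overline{\lambda_i}\lambda_{i'}\, \beta_i \alpha_i^* \alpha_{i'} \beta_{i'}^* = \sum_i |\lambda_i|^2 \beta_i\beta_i^* = \sum_i \beta_i\beta_i^*,
\end{align*}
using $|\lambda_i| = 1$ and $r(\alpha_i) = r(\beta_i)$, which gives condition (\ref{item:standard form beta}) and completes the proof. The main obstacle is the middle step: isolating the diagonal contribution (the coefficient of $\xi$) from the off-diagonal cross terms $\alpha_i\alpha_{i'}^*$ in $uu^*$, which is precisely what makes the \eup\ hypothesis applicable. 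The same separation can alternatively be obtained by applying the canonical faithful conditional expectation of $C^*(E)$ onto its diagonal, but the boundary-path representation avoids invoking Condition (L) and keeps the argument purely algebraic.
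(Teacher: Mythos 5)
There is a genuine gap at the central step of your argument. In the boundary-path representation it is \emph{not} true that a cross term $\lambda_i\overline{\lambda_{i'}}\,\alpha_i\alpha_{i'}^*$ contributes to the coefficient of $\xi$ only when $\alpha_i = \alpha_{i'}$. Your standardization makes the $\alpha_i$ (for fixed $\beta$) distinct, but not prefix-free: if $\alpha_{i'} = \alpha_i\delta$ for a nonempty path $\delta$, then for the eventually periodic boundary path $\xi = \alpha_i\delta^\infty$ one has $\alpha_i\alpha_{i'}^*\xi = \alpha_i\delta^\infty = \xi$, so an off-diagonal term does hit the diagonal coefficient; concretely, in $E_2$ take $\alpha_i = a$, $\alpha_{i'} = aa$, $\xi = aaa\cdots$. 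Hence the identity $\sum_{i:\,\alpha_i \text{ prefix of } \xi}|\lambda_i|^2 = 1$ is unjustified at eventually periodic $\xi$, and the \eup\ hypothesis cannot be invoked. This is not a removable technicality, because the proposition covers \emph{all} finite graphs, including trivial ones: for a single loop, $L_R(E)\cong R[x,x^{-1}]$, there is exactly one boundary path $e^\infty$, on which both $e$ and $e^*$ act as the identity, so the representation collapses $x$ to $1$ and $uu^*=1$ only yields $\left|\sum_k\lambda_k\right|^2 = 1$, which separates nothing. Choosing aperiodic $\xi$ to dodge the bad terms works only when every cylinder contains such paths, which is essentially Condition (L) --- not assumed here --- and your fallback via the conditional expectation onto $\DD(C^*(E))$ is restricted in the same way, as you yourself note.

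The paper plugs exactly this hole with the $\Z$-grading, which is the idea your argument is missing. Conjugating $u^*u = 1$ by a single $\beta_l$ gives $r(\beta_l) = \sum_{i,j\in L}\lambda_i\overline{\lambda_j}\,\alpha_i^*\alpha_j$ with $L = \{i \mid \beta_i = \beta_l\}$, and comparing degree-zero components kills all cross terms: for distinct $\alpha_i,\alpha_j$ the product $\alpha_i^*\alpha_j$ is either $0$ (equal lengths, distinct paths) or homogeneous of nonzero degree --- in particular the prefix-extension terms $\alpha_i^*(\alpha_i\delta) = \delta$ that defeat your pointwise evaluation have degree $|\delta|\neq 0$ and are discarded. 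This yields $r(\beta_l) = \bigl(\sum_{i\in L}|\lambda_i|^2\bigr)r(\beta_l)$, to which the \eup\ applies, giving $|L| = 1$ (so the $\beta_i$ are distinct) and $|\lambda_l| = 1$ simultaneously; conditions (\ref{item:standard form alpha}) and (\ref{item:standard form beta}) then fall out of $uu^* = 1$, $u^*u = 1$ and Lemma \ref{lem: orthogonal paths}, much as in your final paragraph. If you want to keep your representation-theoretic picture, the degree-zero extraction is the correct algebraic substitute for the ``diagonal coefficient'' you tried to isolate: unlike the expectation onto the diagonal, it is available for every graph and kills precisely the periodic-path contributions.
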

\begin{proof}
By Remark \ref{rmk: extend paths},
\[
	u = \sum_{i=1}^n \lambda_i \alpha_i \beta_i^*
\]
for some $\lambda_i \in R$
and paths $\alpha_i, \beta_i$ with $r(\alpha_i) = r(\beta_i)$ such that the $\beta_i$ are all of some fixed length $t$ or shorter but ending in a sink. 
By combining like terms and dropping terms where $\lambda_i = 0$, we may assume that the pairs $(\alpha_i, \beta_i)$ are distinct and that each $\lambda_i \neq 0$.

We will now show that the $\beta_i$ are distinct and incidentally that $\lambda_i = \pm 1$ for each $i$.
Fix an index $1 \leq l \leq n$.
We have 
\begin{align*}
  r(\beta_l) &= \beta_l^* \beta_l = \beta_l^* u^*u \beta_l = \beta_l^* \left( \left(\sum_{i=1}^n \lambda_i \beta_i \alpha_i^* \right) \left( \sum_{j=1}^n \overline{\lambda_j} \alpha_j \beta_j^* \right) \right) \beta_l \\
    &= \beta_l^* \left( \sum_{i,j= 1}^n \lambda_i \overline{\lambda_j} \beta_i \alpha_i^* \alpha_j \beta_j^* \right) \beta_l
    = \sum_{i,j= 1}^n \lambda_i \overline{\lambda_j} \beta_l^* \beta_i \alpha_i^* \alpha_j \beta_j^* \beta_l.
\end{align*}
By construction of the $\beta_i$, we see that $\beta_i^* \beta_j = 0$ if $\beta_i \neq \beta_j$. 
To ease notation, let $L = \{i \mid \beta_i = \beta_l\}$.
Continuing the computation we get 
\begin{align} \label{eqn: r(beta_l)}
 r(\beta_l) &= \sum_{i,j \in L} \lambda_i \overline{\lambda_j} \beta_l^* \beta_l \alpha_i^* \alpha_j \beta_l^* \beta_l = \sum_{i,j \in L} \lambda_i \overline{\lambda_j} \alpha_i^* \alpha_j,
\end{align}
where the last equality follows from $\beta_l^* \beta_l = r(\beta_l) = r(\alpha_i)$ for $i \in L$.

Recall that all Leavitt path algebras are $\Z$-graded, that vertex projections are homogeneous of degree $0$, and elements of the form $\mu \nu^*$ are homogeneous of degree $|\mu| - |\nu|$ (\cite[Definition 4.5 and Proposition 4.7]{TomfordeLeavittOverRing}).
Since the left hand side of (\ref{eqn: r(beta_l)}) is homogeneous of degree $0$ the right hand side must be so too. 
Hence, we can discard any element not of degree $0$ from the sum. 
Since the pairs $(\alpha_i, \beta_i)$ are distinct, and since we only sum over indices $i,j$ with $\beta_i = \beta_l = \beta_j$, we see that the $\alpha_i$ appearing in the sum must be distinct. 
Thus, for $i \neq j$ the term $\alpha_i^* \alpha_j$ is either $0$ or homogeneous of degree $|\alpha_i| - |\alpha_j|$ (note that if $|\alpha_i| = |\alpha_j|$ then $\alpha_i^* \alpha_j = 0$ since the paths are distinct). 
Thus, we get  
\[
 r(\beta_l) = \sum_{i \in L} |\lambda_i|^2  \alpha_i^* \alpha_i.
\]
Using that $\alpha_i^* \alpha_i = r(\alpha_i) = r(\beta_l)$, it follows that
\[
 %r(\beta_l) = \left( \sum_{i \in L} \lambda_i^2 \right) r(\beta_l).
 r(\beta_l) = \left( \sum_{i \in L} |\lambda_i|^2 \right) r(\beta_l).
\]
Since each $\lambda_i$ is non-zero and $R$ has an \eup, this equality can only hold if $|L| = 1$ and $|\lambda_l| = 1$.
Hence, all the $\beta_i$ are distinct and $|\lambda_i| = 1$ for each $i$, i.e.\ (\ref{item:standard form lambda}) holds.

Since the $\beta_i$ are distinct, $\beta_i^* \beta_j = 0$ if $i\neq j$, and as always $\beta_i^* \beta_i = r(\beta_i)$.
Thus,
\begin{align*}
  1 &= uu^* = \left( \sum_{i=1}^n \lambda_i \alpha_i \beta_i^* \right) \left( \sum_{j=1}^n \overline{\lambda_i} \beta_j \alpha_j^* \right) \\
    &= \sum_{i=1}^n |\lambda_i|^2 \alpha_i \alpha_i^* = \sum_{i=1}^n \alpha_i \alpha_i^*.
\end{align*}
That is, (\ref{item:standard form alpha}) holds. 
By Lemma \ref{lem: orthogonal paths}, (\ref{item:standard form alpha}) implies $\alpha_i^* \alpha_j = 0$ if $i \neq j$, so  
\[
  1 = u^* u = \left( \sum_{j=1}^n \lambda_i \beta_j \alpha_j^* \right) \left( \sum_{i=1}^n \overline{\lambda_i} \alpha_i \beta_i^* \right) = \sum_{i=1}^n \beta_i \beta_i^*.
\]
Hence, (\ref{item:standard form beta}) also holds.
\end{proof}

In the case of a finite graph with no sinks, we can give a completely geometric description of when a set of paths $\alpha_1, \alpha_2, \ldots, \alpha_n$ satisfy that the projections $\alpha_i \alpha_i^*$ sum to $1$. 

\begin{lemma} \label{lem: disjoint cylinder sets}
Let $E$ be a finite graph with no sinks, let $R$ be a unital commutative ring with characteristic $0$, and let $\alpha_i$, $i=1,2,\ldots,n$, be paths in $E$. 
The following are equivalent
\begin{enumerate}
 \item $\bigsqcup_{i=1}^n \ZZ(\alpha_i) = E^{\infty}$.\label{item:cylinders}
 \item $\sum_{i=1}^n \alpha_i \alpha_i^* = 1$.\label{item:sum}
\end{enumerate}
\end{lemma}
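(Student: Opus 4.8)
The plan is to reduce both statements to a single combinatorial fact about paths of a fixed length, using the partition of unity supplied by Remark \ref{rmk: extend paths}. Since $E$ has no sinks, the set $X_{v,m}$ from that remark is exactly $\{\gamma \in E^* \mid s(\gamma) = v,\ |\gamma| = m\}$, so summing $\sum_{\gamma \in X_{v,m}}\gamma\gamma^* = v$ over all $v \in E^0$ and using $1 = \sum_{v\in E^0} v$ gives $1 = \sum_{\gamma \in E^m}\gamma\gamma^*$ for every $m$. The projections $\{\gamma\gamma^* \mid \gamma \in E^m\}$ are pairwise orthogonal (since $\gamma^*\delta = 0$ for distinct paths of equal length) and linearly independent over $R$ by \cite[Proposition 4.9]{TomfordeLeavittOverRing} together with $\operatorname{char} R = 0$. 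I will also use the dictionary relating geometry and algebra: for $\alpha,\beta \in E^*$ one has $\alpha^*\beta = 0$ exactly when neither extends the other, which by the criterion recorded in the preliminaries is exactly when $\ZZ(\alpha)$ and $\ZZ(\beta)$ are disjoint.

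Set $m = \max_i |\alpha_i|$ and let $S$ denote the collection of all length-$m$ paths that extend some $\alpha_i$. The key point is that each condition in the lemma is equivalent to the conjunction of (a) the $\alpha_i$ have pairwise disjoint cylinder sets, and (b) $S = E^m$. For the algebraic side, expanding each $\alpha_i\alpha_i^* = \alpha_i r(\alpha_i)\alpha_i^* = \sum_{\gamma \in X_{r(\alpha_i),m-|\alpha_i|}}(\alpha_i\gamma)(\alpha_i\gamma)^*$ as in Remark \ref{rmk: extend paths} rewrites $\sum_i \alpha_i\alpha_i^*$ as a sum of length-$m$ projections; under pairwise disjointness no length-$m$ path extends two different $\alpha_i$, so this is precisely $\sum_{\mu \in S}\mu\mu^*$, with each term occurring once. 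Comparing with $1 = \sum_{\mu\in E^m}\mu\mu^*$ and invoking orthogonality, linear independence, and $\operatorname{char} R = 0$, the equation $\sum_i \alpha_i\alpha_i^* = 1$ holds iff $S = E^m$. On the geometric side, because $E$ has no sinks every length-$m$ path extends to an infinite path and every infinite path has a length-$m$ prefix, so $\bigcup_i \ZZ(\alpha_i) = E^\infty$ translates (via ``$\alpha_i$ is a prefix of the length-$m$ prefix of $\xi$'') into exactly $S = E^m$.

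With this common reformulation the two implications are short. For (2)$\Rightarrow$(1): Lemma \ref{lem: orthogonal paths} applied to $\sum_i\alpha_i\alpha_i^* = 1$ gives $\alpha_i^*\alpha_j = 0$ for $i \neq j$, which is disjointness (a); the argument above then yields $S = E^m$, i.e.\ (b), and hence the covering together with disjointness gives the disjoint union. For (1)$\Rightarrow$(2): disjointness of the $\ZZ(\alpha_i)$ is hypothesis (a) and gives $\alpha_i^*\alpha_j = 0$; the covering gives (b), i.e.\ $S = E^m$; reversing the expansion of Remark \ref{rmk: extend paths} then collapses $\sum_{\mu\in S}\mu\mu^* = \sum_{\mu\in E^m}\mu\mu^* = 1$ back to $\sum_i\alpha_i\alpha_i^*$. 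The step I expect to require the most care is the bookkeeping in the core reformulation: one must verify that, under disjointness, the extensions of distinct $\alpha_i$ to length $m$ are themselves distinct (so that $S$ is a genuine disjoint union and the length-$m$ projections are counted with multiplicity one), as this is what licenses the comparison with the linearly independent family $\{\mu\mu^* \mid \mu \in E^m\}$; the no-sinks hypothesis is exactly what keeps $X_{v,m}$ purely of length $m$ and prevents the stray shorter-paths-ending-at-sinks terms of Remark \ref{rmk: extend paths} from appearing.
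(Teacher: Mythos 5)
Your proposal is correct and follows essentially the same route as the paper's proof: disjointness of the cylinders comes from Lemma \ref{lem: orthogonal paths}, the implication (\ref{item:cylinders})$\Rightarrow$(\ref{item:sum}) uses the refinement to a common length via relation (\ref{item:summation_relation}) of Definition \ref{def:lpa} (Remark \ref{rmk: extend paths}), and your level-$m$ comparison $S = E^m$ for coverage is a bookkeeping reformulation of the paper's contradiction argument, in which an uncovered infinite path is truncated to $\nu \in E^m$ and $\nu = 1\nu = \sum_{i}\alpha_i\alpha_i^*\nu = 0$ is derived. One minor citation point: the linear independence of $\{\mu\mu^* \mid \mu \in E^m\}$ is not literally \cite[Proposition 4.9]{TomfordeLeavittOverRing}, which concerns real paths, but it follows at once by right-multiplying a vanishing combination by any $\delta \in E^m$ and then invoking that proposition.
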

\begin{proof}

By repeated applications of (\ref{item:summation_relation}) from Definition \ref{def:lpa}, it follows that $(\ref{item:cylinders})$ implies $(\ref{item:sum})$.
Suppose now that $(\ref{item:sum})$ holds.
We first show that $\cup_i \ZZ(\alpha_i) = E^{\infty}$. 
Suppose for contradiction that there is some $\mu \in E^{\infty}$ that is not in $\sqcup_i \ZZ(\alpha_i)$, and let $\nu$ be the initial segment of $\mu$ of length $\max{|\alpha_i|}$. 
Then $\alpha_i^* \nu = 0$ for all $i$ and therefore 
\[
 \nu = 1\nu = \left(\sum_{i=1}^n \alpha_i \alpha_i^* \right) \nu = \sum_{i=1}^n \alpha_i \alpha_i^* \nu = 0,
\]
a contradiction.
By Lemma \ref{lem: orthogonal paths}, we have $\alpha_i^* \alpha_j = 0$ for $i \neq j$.
This implies that no $\alpha_i$ is an initial segment of any $\alpha_j$, and hence that the cylinder sets are disjoint. 
\end{proof}

%\section{Projections in $L_{\Z}(E)$}
\section{Projections in $L_{R}(E)$}

The aim of this section is to study the projections in $L_{R}(E)$
for a finite graph $E$ and $R$ \nice. 
Specifically, it will be proved that all projections in $L_{R}(E)$
are elements of the diagonal.

\begin{definition}\label{def: diagonal projections}
Let $E$ be a graph. 
A projection $p \in L_R(E)$ is said to be \emph{diagonal} if $p \in \DD(L_{R}(E))$.
\end{definition}

\noindent The following Example gives a basic illustration of a Leavitt path algebra where all projections are diagonal.

\begin{example}
Consider the graph 

\begin{center}
\begin{tikzpicture}[shorten >=1pt]

\tikzset{vertex/.style = {shape=circle,draw,minimum size=2em}}
\tikzset{edge/.style = {->,> = latex'}}

\node at (0,0) {$F_n$};

% vertices
\node[vertex] (u1) at (2,0) {$u_1$};
\node[vertex] (u2) at (4,0) {$u_2$};
\node (u3) at (6,0) {$\cdots$};
\node[vertex] (un) at (8,0) {$u_n$};

%edges
\draw[edge] (u1) to (u2);
\draw[edge] (u2) to (u3);
\draw[edge] (u3) to (un);

\end{tikzpicture}
\end{center}

\noindent It is well known that $L_{\Z}(F_n) \cong M_n(\Z)$, and furthermore that the diagonal is exactly the diagonal matrices. 
We claim that $L_{\Z}(F_n)$ only has diagonal projections. 

Suppose 
\[
  P = \begin{pmatrix}
       a_{11} & a_{12} & \cdots & a_{1n} \\
       a_{21} & a_{22} & \cdots & a_{2n} \\
       \vdots & \vdots & \ddots & \vdots \\
       a_{n1} & a_{n2} & \cdots & a_{nn}
      \end{pmatrix}
\]
is a projection in $M_n(\Z)$.
The relations $P = P^*$ and $P = P^2$ imply that $P = PP^T$, so for every $i$ we have
\[
  a_{ii} = \sum_{j=1}^n a_{ij}^2.
\]
This can only be satisfied if $P$ is a diagonal matrix with entries in $\{0,1\}$.
\end{example}

The following two lemmas will pave the way for the proof that all projections in $L_{\Z}(E)$ are diagonal.

\begin{lemma} \label{lem: doubling}
Let $E$ be a graph and let $R$ be \nice. 
If $x,y \in L_{R}(E)$
satisfy $2x = 2y$ then $x = y$. 
\end{lemma}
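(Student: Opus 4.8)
The plan is to reduce the claim to the corresponding fact in a $C^*$-algebra, where the scalar $2$ is invertible. Setting $z = x - y$, the hypothesis $2x = 2y$ is equivalent to $2z = 0$, so it suffices to prove that $L_R(E)$ has no additive $2$-torsion, i.e.\ that $2z = 0$ forces $z = 0$.

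First I would apply the embedding $\iota_E \colon L_R(E) \to C^*(E)$ from Lemma \ref{lem: LZE into CE}, which is available because $R$ is in particular a subring of $\C$ closed under complex conjugation. As $\iota_E$ is a homomorphism it is additive, so from $2z = z + z = 0$ we get $\iota_E(z) + \iota_E(z) = 0$, that is, $2\,\iota_E(z) = 0$ in $C^*(E)$, where now $2$ denotes the complex scalar. Since $C^*(E)$ is a complex algebra, multiplication by $2$ is invertible (multiply by $\tfrac12 \in \C$), so $2\,\iota_E(z) = 0$ already forces $\iota_E(z) = 0$. Injectivity of $\iota_E$ then yields $z = 0$, i.e.\ $x = y$.

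There is no serious obstacle here; the only point worth isolating is the idea of passing to an ambient object in which $2$ is invertible, and the embedding into $C^*(E)$ supplies exactly such an object. One could instead argue purely algebraically by producing an $R$-basis (a normal form) for $L_R(E)$ and reading off that it is torsion-free over the integral domain $R$, but this would require more care with the relation $v = \sum_{e \in s^{-1}(v)} e e^*$; the $C^*$-embedding sidesteps this bookkeeping. I would also remark that the argument uses only that $R \subset \C$ is closed under conjugation, so the full strength of the \eup\ hypothesis is not actually needed for this lemma.
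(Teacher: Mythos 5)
Your proof is correct and follows essentially the same route as the paper's: both embed $L_R(E)$ into $C^*(E)$ via $\iota_E$ from Lemma \ref{lem: LZE into CE} and use that $2$ is invertible in $\C$ together with injectivity of $\iota_E$. Your closing observation is also accurate --- the paper's argument likewise uses only that $R$ is a subring of $\C$ closed under conjugation, not the full \eup\ hypothesis.
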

\begin{proof}
Let $\iota_E \colon L_{R}(E) \to C^*(E)$
be the embedding from Lemma \ref{lem: LZE into CE}. 
We have
\[
  2\iota_E(x) = \iota_E(2x) = \iota_E(2y) = 2\iota_E(y),
\]
so since $2$ is invertible in $\C$, we get $\iota_E(x) = \iota_E(y)$. 
Because $\iota_E$ is an embedding, $x = y$.
\end{proof}

\begin{remark} \label{rmk: add tail}
Let $E$ be a graph, $R$ a commutative unital ring. 
Using a process called ``adding tails'' (see \cite{BatesPaskRaeburnSzymanski}),  we can find a graph $F$ with no sinks such that $L_{R}(E)$ embeds into $L_{R}(F)$ as $*$-algebras, furthermore this embedding maps $E^*$ into $F^*$. 

For Leavitt path algebras over fields this is proved in \cite[Section 5]{AbramsArandaPino08}, where infinite emitters are also ``desingularized'' . 
We are only interested in removing sinks, so we preform a partial desingularization, we are also interested in working over general rings, not just fields, however the arguments for constructing the embedding given in \cite{AbramsArandaPino08} works perfectly well in this setting, so we only review them briefly. 
We define $F$ as follows
\begin{align*}
  F^0 &= E^0 \sqcup \{ w_{i} \mid i \in \N, w \in E^0, w \text{ is a sink} \}, \\
  F^1 &= E^1 \sqcup \{ e^{w}_i \mid i \in \N, w \in E^0, w \text{ is a sink} \},
\end{align*}
we let the range and source maps extend those of $E$ and define $r(e^{w}_i) = e^{w}_{i+1}$ and 
\[
  s(e^{w}_i) = \begin{cases}
                w_{i-1}, & \text{if } i > 1 \\
                w, &  \text{otherwise}
               \end{cases}.
\]
Now we can define a $*$-homomorphism $\phi \colon L_{R}(E) \to L_{R}(F)$ by stipulating that $\phi(v) = v$ and $\phi(e) = e$ for all $v \in E^0$ and $e \in E^1$. 
By the Graded Uniqueness Theorem (\cite[Theorem 5.3]{TomfordeLeavittOverRing}) $\phi$ is injective. 
\end{remark}

\begin{lemma} \label{lem: sum of paths}
Let $E$ be a graph, let $R$ be \nice, let $x \in L_{R}(E)$ and let $\lambda \in R$. 
If $\alpha, \beta \in E^*$ are such that 
\[
  2 x = \alpha + \lambda \beta, 
\]
then $\alpha = \beta$.
\end{lemma}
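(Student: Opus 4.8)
The plan is to test divisibility by $2$ in the reduction of $L_R(E)$ modulo $2$. The key observation is that, since $R$ has an \eup, Remark \ref{rmk:PropX_nohalf} gives $\tfrac{1}{2}\notin R$; consequently $2$ is not invertible in $R$, so $2R\subsetneq R$ and the quotient ring $S:=R/2R$ is \emph{nonzero}. This nonvanishing of $S$ is exactly what the hypothesis on $R$ buys us, and it is the crux of the whole argument.

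First I would produce a ring homomorphism that kills the factor of $2$. The quotient map $\pi\colon R\to S$ together with the universal property of $L_R(E)$ yields a homomorphism $q\colon L_R(E)\to L_S(E)$ determined on generators by $q(v)=v$, $q(e)=e$, $q(e^*)=e^*$ and satisfying $q(r z)=\pi(r)\,q(z)$ for $r\in R$, $z \in L_R(E)$. Well-definedness is routine: the defining relations of a Leavitt path algebra have coefficients in $\{0,1\}$, so they are preserved verbatim in $L_S(E)$, and no compatibility with the involution is needed since $q$ is used only as an algebra map.

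Next I would apply $q$ to the identity $2x=\alpha+\lambda\beta$. Because $\pi(2)=0$ in $S$, the left-hand side becomes $q(2x)=\pi(2)\,q(x)=0$, while the right-hand side becomes $\alpha+\pi(\lambda)\beta$, where $\alpha$ and $\beta$ now denote the corresponding real paths inside $L_S(E)$. Thus $\alpha+\pi(\lambda)\beta=0$ in $L_S(E)$. I would then invoke linear independence: distinct real paths are linearly independent over any commutative unital ring (\cite[Proposition 4.9]{TomfordeLeavittOverRing}), so if $\alpha\neq\beta$ the vanishing of $\alpha+\pi(\lambda)\beta$ forces the coefficient of $\alpha$ to vanish, i.e.\ $1=0$ in $S$. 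Since $S\neq 0$ this is impossible, and therefore $\alpha=\beta$.

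The only genuinely delicate points are the two facts invoked above — that $S=R/2R$ is nonzero (which is precisely the content of the \eup\ hypothesis, via Remark \ref{rmk:PropX_nohalf}) and that the monomials $\alpha,\beta$ remain distinct and linearly independent after passing to $L_S(E)$ — and both are immediate once stated. I therefore expect the main conceptual obstacle to be purely one of recognizing the right move: divisibility by $2$ should be tested in the mod-$2$ reduction $L_{R/2R}(E)$, rather than inside $C^*(E)$ as in Lemma \ref{lem: doubling}. Note also that this single argument handles every case uniformly, including $\lambda=0$ (where it shows no such $x$ can exist, so the implication holds vacuously) and the case where $\alpha$ and $\beta$ have different lengths, with no need to split by degree.
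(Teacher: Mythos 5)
Your proposal is correct, and it takes a genuinely different route from the paper's. The paper never leaves $L_R(E)$: it first invokes Remark \ref{rmk: add tail} to assume $E$ has no sinks, writes $x=\sum_i \lambda_i\mu_i\nu_i^*$, and right-multiplies $2x=\alpha+\lambda\beta$ by a path $\gamma$ with $s(\gamma)=r(\alpha)$ and $|\gamma|>\max_i|\nu_i|$, so that $2x\gamma$ becomes an $R$-combination of real paths; linear independence of the real paths (\cite[Proposition 4.9]{TomfordeLeavittOverRing}) then shows that $\alpha\gamma\neq\beta\gamma$ would force an identity $1=2\bigl(\sum_{k\in I}\hat{\lambda}_k\bigr)$ in $R$, contradicting $\tfrac{1}{2}\notin R$ (Remark \ref{rmk:PropX_nohalf}), whence $\alpha\gamma=\beta\gamma$ and so $\alpha=\beta$. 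Your base change along $R\to S=R/2R$ short-circuits both preliminary steps: since $q(2x)=0$ identically, you never need to put $x$ into real-path form, so neither the adding-tails reduction nor the choice of $\gamma$ is needed, and the argument handles arbitrary graphs and the degenerate case $\lambda=0$ uniformly. The two proofs consume exactly the same inputs --- linear independence of real paths and $\tfrac{1}{2}\notin R$ (which for a unital subring $R\subseteq\C$ is equivalent to your key point that $S\neq 0$) --- but yours trades the paper's internal manipulations for the routine existence of $q$ via the universal property. The one point you should make explicit is that you apply Tomforde's Proposition 4.9 over $S$, which need not embed in $\C$ and may have characteristic $2$; this is legitimate precisely because Tomforde proves independence of the real paths over an arbitrary commutative unital ring, but it deserves a remark, since everywhere else in the paper that proposition is invoked only over subrings of $\C$ or rings of characteristic $0$, and your quotient is the one place where the full generality of Tomforde's setting is actually used.
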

\begin{proof}
By Remark \ref{rmk: add tail}, we can assume that $E$ has no sinks. 
We may write 
\[
 x = \sum_{i=1}^n \lambda_i \mu_i \nu_i^*,
\]
where $\lambda_i \in R$
$\mu_i, \nu_i \in E^*$. 
Pick a path $\gamma$ based at $r(\alpha)$ such that $|\gamma|$ is greater than $\max\{ |\nu_i| \}$. 
Then $x \gamma$ is a polynomial in real edges,
\[
  x \gamma = \sum_{k=1}^m \hat{\lambda}_k \xi_k,
\]
where $\hat{\lambda}_k \in R$
and $\xi_k \in E^*$. 
Thus,  
\[
  \sum_{k=1}^m 2 \hat{\lambda}_k \xi_k = 2x\gamma = \alpha \gamma + \lambda \beta \gamma.
\]
The real paths form a linearly independent set in $L_{R}(E)$ (\cite[Proposition 4.9]{TomfordeLeavittOverRing}), so if $\alpha \gamma \neq \beta \gamma$, it follows from the above equation that there is some subset $I \subseteq \{1,2,\ldots,m\}$ such that 
\[
 1 = \sum_{k \in I} 2 \hat{\lambda}_k = 2 \left( \sum_{k \in I} \hat{\lambda}_k \right),
\]
which contradicts that $\frac{1}{2}\notin R$ (Remark \ref{rmk:PropX_nohalf}). 
Therefore, we must have $\alpha \gamma = \beta \gamma$.
Since $s(\gamma) = r(\alpha)$, we have $\alpha \gamma \neq 0$ and hence we must have $\beta \gamma \neq 0$ so $s(\gamma) = r(\beta)$.
Thus $\alpha = \beta$.
\end{proof}

\begin{theorem} \label{thm: projections look nice}
Let $E$ be a finite graph and let $R$ be \nice.
If $p \in L_{R}(E)$
is a projection then
\[
  p = \sum_{i=1}^n \beta_i \beta_i^*,
\]
for some paths $\beta_i \in E^*$ with $\beta_i^* \beta_j = 0$ for $i \neq j$. 
\end{theorem}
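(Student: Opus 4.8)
The plan is to start from a projection $p$ and reduce it to the standard form using the unitary description already available. Since $p = p^* = p^2$, the element $u = 1 - 2p$ satisfies $u^* = 1 - 2p = u$ and $u^2 = 1 - 4p + 4p^2 = 1$, so $u$ is a self-adjoint unitary. By Proposition \ref{prop: standard form unitaries}, we may write
\[
  u = \sum_{i=1}^n \lambda_i \alpha_i \beta_i^*,
\]
with $\sum_i \alpha_i \alpha_i^* = 1 = \sum_i \beta_i \beta_i^*$ and each $|\lambda_i| = 1$. Recovering $p$ gives $2p = 1 - u = \sum_i \alpha_i\alpha_i^* - \sum_i \lambda_i \alpha_i \beta_i^*$, so the task becomes extracting a clean diagonal sum of the form $\sum_j \beta_j \beta_j^*$ from this doubled expression.

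The key step is to use self-adjointness of $u$ to pin down the structure of the terms $\lambda_i \alpha_i \beta_i^*$. Since $u = u^*$ and $u^* = \sum_i \overline{\lambda_i} \beta_i \alpha_i^*$, comparing the two representations should force a pairing between the index set: for each term $\lambda_i \alpha_i \beta_i^*$ there is a matching term whose source/range data is the transpose. I would argue that after collecting terms, each surviving contribution must satisfy $\alpha_i = \beta_i$ (so the term is diagonal) or else the off-diagonal pieces pair up. The clean way to force $\alpha_i = \beta_i$ is Lemma \ref{lem: sum of paths}: that lemma says precisely that a relation of the shape $2x = \alpha + \lambda\beta$ forces $\alpha = \beta$, and it is the tool tailored to squeeze a doubled projection down to genuine path projections. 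I expect to apply it termwise (or to suitable $\gamma$-truncations via Remark \ref{rmk: extend paths}) to conclude that the representation of $p$ only involves terms $\beta_j \beta_j^*$.

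Concretely, I would multiply the relation $2p = \sum_i \alpha_i\alpha_i^* - \sum_i \lambda_i \alpha_i\beta_i^*$ on the right by a fixed $\beta_\ell$ (using that the $\beta_i$ are mutually orthogonal, as established inside the proof of Proposition \ref{prop: standard form unitaries}) to isolate individual terms, and then invoke Lemma \ref{lem: sum of paths} to deduce that the nondiagonal terms cannot appear unless they are in fact diagonal. Self-adjointness of $2p$ and the homogeneity/grading argument (the same degree-zero reasoning used in Proposition \ref{prop: standard form unitaries}) should reduce everything to terms $\alpha_i\beta_i^*$ with $\alpha_i = \beta_i$. At that point $2p = \sum_{i} \alpha_i\alpha_i^* - \sum_i \lambda_i \beta_i\beta_i^*$ with $\alpha_i = \beta_i$, and Lemma \ref{lem: doubling} (cancellation of the factor $2$) together with the identity $\sum_i \alpha_i\alpha_i^* = 1$ lets me read off $p$ as a sum of orthogonal path projections $\sum_j \beta_j\beta_j^*$, with orthogonality $\beta_i^*\beta_j = 0$ coming from Lemma \ref{lem: orthogonal paths}.

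The main obstacle I anticipate is the bookkeeping of the off-diagonal cross terms: when two distinct indices $i \neq j$ happen to have $\beta_i = \beta_j$ prevented by Proposition \ref{prop: standard form unitaries}, but the $\alpha_i$ may still interact, I must be careful that the self-adjointness constraint genuinely forces each contributing $\alpha_i\beta_i^*$ to collapse to a diagonal term rather than merely pairing two symmetric off-diagonal terms. Handling this cleanly probably requires choosing a common refinement of the paths (extending all $\alpha_i, \beta_i$ to a common length via Remark \ref{rmk: extend paths}) so that all path projections become genuinely orthogonal, after which the comparison of $u$ with $u^*$ becomes a finite matching of basis elements $\alpha_i\beta_i^*$ and their adjoints $\beta_i\alpha_i^*$, and the diagonal conclusion follows.
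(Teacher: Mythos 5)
Your proposal is, modulo the sign convention ($u = 1-2p$ versus the paper's $u = 2p-1$), exactly the paper's own argument: form the self-adjoint unitary, put it in standard form via Proposition \ref{prop: standard form unitaries}, right-multiply by a fixed $\beta_\ell$ (legitimate because $\sum_i \beta_i\beta_i^* = 1$ and Lemma \ref{lem: orthogonal paths} give $\beta_i^*\beta_\ell = \delta_{i\ell}\, r(\beta_\ell)$), obtaining $2p\beta_\ell = \beta_\ell - \lambda_\ell \alpha_\ell$, and invoke Lemma \ref{lem: sum of paths} to conclude $\alpha_\ell = \beta_\ell$. The ``main obstacle'' you anticipate --- symmetric off-diagonal terms pairing up rather than collapsing --- does not arise: right multiplication by $\beta_\ell$ isolates the single term of index $\ell$, so no comparison of $u$ with $u^*$, no grading argument beyond what is already inside Proposition \ref{prop: standard form unitaries}, and no common-length refinement via Remark \ref{rmk: extend paths} is needed at this stage.

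There is one step you leave implicit that is genuinely needed: before cancelling the factor $2$ with Lemma \ref{lem: doubling}, you must know that each $\lambda_i = \pm 1$, so that $2p = \sum_i (1-\lambda_i)\beta_i\beta_i^*$ has coefficients in $\{0,2\}$ and the right-hand side is actually of the form $2y$ with $y \in L_R(E)$. The conclusion $|\lambda_i| = 1$ from Proposition \ref{prop: standard form unitaries} does not suffice over a general ring with an essentially unique partition of the unit: in the Gaussian integers, $\lambda_i = \pm i$ has modulus one, and then $1 - \lambda_i \notin 2R$, so Lemma \ref{lem: doubling} cannot be applied. The paper closes this by computing $\beta_k = u^2 \beta_k = \lambda_k^2 \beta_k$ and using linear independence of the real paths to get $\lambda_k^2 = 1$; alternatively, once $u = \sum_i \lambda_i \beta_i\beta_i^*$, self-adjointness $u = u^*$ together with the same independence gives $\lambda_i = \overline{\lambda_i}$, hence $\lambda_i = \pm 1$. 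With that one line added, your argument is complete and identical in substance to the paper's proof.
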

\begin{proof}
Put $u = 2p - 1$. 
Then $u$ is a self-adjoint unitary, i.e.\ $u = u^*$ and $u^2 = 1$. 
By Proposition \ref{prop: standard form unitaries}, we can write 
\[
  u = \sum_{i=1}^n \lambda_i \alpha_i \beta_i^*,
\]
where $|\lambda_i| = 1$,
$\alpha_i, \beta_i \in E^*$ and $\sum_{i=1}^n \alpha_i \alpha_i^* = 1 = \sum_{i=1}^n \beta_i \beta_i^*$.
Thus,
\[
  2p = 1 + u = \sum_{i=1}^n \beta_i \beta_i^* + \sum_{i=1}^n \lambda_i \alpha_i \beta_i^* 
     = \sum_{i=1}^n \left( \beta_i + \lambda_i \alpha_i \right) \beta_i^*.
\]	

Let $k \in \{1,2,\ldots, n\}$ be given.
By Lemma \ref{lem: orthogonal paths} $\beta_i^* \beta_k = 0 $ when $i \neq k$, so 
\[
  2p\beta_k = \beta_k + \lambda_k \alpha_k.
\]
By Lemma \ref{lem: sum of paths}, this implies that $\alpha_k = \beta_k$. 
So 
\[
  u = \sum_{i=1}^n \lambda_i \beta_i \beta_i^*.
\]
We then get
\[
  \beta_k = \left( u^2 \right) \beta_k = \left( \sum_{i=1}^n \lambda_i \beta_i \beta_i^* \right) \left( \sum_{j=1}^n \lambda_j \beta_j \beta_j^* \right) \beta_k = \left( \sum_{i=1}^n \lambda_i^2 \beta_i \beta_i^* \right) \beta_k = \lambda_k^2 \beta_k.
\]
Since the real paths are linearly independent (\cite[Proposition 4.9]{TomfordeLeavittOverRing}), we get that $\lambda_k = \pm 1$. 
It follows that
\[
  2 p = \sum_{i=1}^n \varepsilon_i \beta_i \beta_i^*,
\]
where $\varepsilon_i \in \{0,2\}$.
An application of Lemma \ref{lem: doubling} completes the proof. 
\end{proof}

\noindent The following result is an immediate consequence of the preceding theorem. 

\begin{corollary} \label{cor: only diagonal projections}
Let $R$ be \nice.
If $E$ is a finite graph, then $L_{R}(E)$ only has diagonal projections. 
\end{corollary}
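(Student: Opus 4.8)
The plan is to invoke Theorem \ref{thm: projections look nice} directly, since that theorem already carries out essentially all of the work. Given a projection $p \in L_R(E)$, the theorem supplies paths $\beta_1, \ldots, \beta_n \in E^*$ with $\beta_i^* \beta_j = 0$ for $i \neq j$ such that $p = \sum_{i=1}^n \beta_i \beta_i^*$. I would then simply observe that each summand $\beta_i \beta_i^*$ is precisely one of the generators appearing in Definition \ref{def: diagonal} of the diagonal, namely $\DD(L_R(E)) = \newspan_R\{\alpha \alpha^* \mid \alpha \in E^*\}$.

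Since the diagonal is by definition the $R$-linear span of the elements $\alpha \alpha^*$ with $\alpha \in E^*$, and $p$ is the finite sum $\sum_{i=1}^n \beta_i \beta_i^*$ (an $R$-linear combination with all coefficients equal to $1 \in R$, which exists as $R$ is unital) of such generators, it follows that $p \in \DD(L_R(E))$. By Definition \ref{def: diagonal projections}, this says exactly that $p$ is diagonal. As $p$ was an arbitrary projection, every projection of $L_R(E)$ is diagonal, which is the claim.

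There is really no obstacle to overcome at this stage. All of the substantive content---writing the self-adjoint unitary $2p - 1$ in the standard form of Proposition \ref{prop: standard form unitaries}, using the fact that $R$ has an \eup\ to force $|\lambda_i| = 1$, and then pinning the coefficients down to $\pm 1$ and the source/range paths down to $\alpha_k = \beta_k$ via Lemmas \ref{lem: sum of paths} and \ref{lem: doubling}---is already discharged inside the proof of Theorem \ref{thm: projections look nice}. The only point worth flagging is that the orthogonality relation $\beta_i^* \beta_j = 0$ handed down by the theorem is not even required for the present conclusion: membership in the diagonal follows purely from the shape $p = \sum_{i=1}^n \beta_i \beta_i^*$, so the corollary is genuinely immediate.
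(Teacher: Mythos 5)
Your proof is correct and matches the paper exactly: the paper also deduces the corollary as an immediate consequence of Theorem \ref{thm: projections look nice}, since the decomposition $p = \sum_{i=1}^n \beta_i \beta_i^*$ places $p$ in $\DD(L_R(E))$ by definition. Your observation that the orthogonality relations $\beta_i^* \beta_j = 0$ are not needed for this final step is also accurate.
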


\noindent 
It is not clear that this result only holds when $R$ is \nice, however it is worth noting that the result certainly requires some restriction on the ring of coefficients.
For instance,
\[
  p = \frac{1}{2}\left( aa^* + ab^* + ba^* + bb^*\right)
\]
is non-diagonal projection in $L_{2,\C}$.

\section{Conclusions}

\begin{proposition}\label{prop: only dynamical homomorphisms}
Let $E,F$ be graphs, let $R$ be a subring of $\C$ closed under complex conjugation, and let $\phi \colon L_{R}(E) \to L_{R}(F)$ be a $*$-homomorphism. 
If $L_{R}(F)$ only has diagonal projections, then $\phi(\DD(L_{R}(E))) \subseteq \DD(L_{R}(F))$. 
\end{proposition}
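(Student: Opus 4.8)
```latex
The plan is to show that $\phi$ maps each generating projection $\alpha\alpha^*$ of $\DD(L_R(E))$ into $\DD(L_R(F))$, and then conclude by linearity. Since $\DD(L_R(E)) = \newspan_R\{\alpha\alpha^* \mid \alpha \in E^*\}$, it suffices to handle the spanning elements $\alpha\alpha^*$.

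First I would observe that each $\alpha\alpha^*$ is a projection in $L_R(E)$: indeed $(\alpha\alpha^*)^* = \alpha\alpha^*$ using the conjugate linear involution, and $(\alpha\alpha^*)^2 = \alpha(\alpha^*\alpha)\alpha^* = \alpha\, r(\alpha)\,\alpha^* = \alpha\alpha^*$ using relation (ii) of Definition \ref{def:lpa} together with $\alpha r(\alpha) = \alpha$. Next, since $\phi$ is a $*$-homomorphism, it preserves the defining relations for a projection: $\phi(\alpha\alpha^*)^* = \phi((\alpha\alpha^*)^*) = \phi(\alpha\alpha^*)$ and $\phi(\alpha\alpha^*)^2 = \phi((\alpha\alpha^*)^2) = \phi(\alpha\alpha^*)$. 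Hence $\phi(\alpha\alpha^*)$ is a projection in $L_R(F)$.

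By the hypothesis that $L_R(F)$ only has diagonal projections, each $\phi(\alpha\alpha^*)$ lies in $\DD(L_R(F))$. Finally, $\DD(L_R(E))$ is the $R$-span of these elements, and $\phi$ is $R$-linear (being an $R$-algebra homomorphism), while $\DD(L_R(F))$ is an $R$-submodule of $L_R(F)$; therefore $\phi$ carries the whole $R$-span into $\DD(L_R(F))$, giving $\phi(\DD(L_R(E))) \subseteq \DD(L_R(F))$.

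I do not expect any genuine obstacle here, since the statement is essentially a formal consequence of the hypothesis once one notes that generators of the diagonal are themselves projections. The only point requiring any care is the verification that $\alpha\alpha^*$ is self-adjoint under the conjugate linear involution and idempotent, which reduces to the Leavitt path algebra relations. In short, the argument is: the generators of $\DD(L_R(E))$ are projections, $\phi$ sends projections to projections, the hypothesis forces their images into $\DD(L_R(F))$, and $R$-linearity propagates this to the whole diagonal.
```
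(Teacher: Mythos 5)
Your proposal is correct and follows exactly the same route as the paper's proof: the paper likewise notes that each $\alpha\alpha^*$ is a projection, that the $*$-homomorphism $\phi$ therefore sends it to a projection, which by hypothesis lies in $\DD(L_R(F))$, and concludes by linearity. Your version merely makes explicit the (routine) verification that $\alpha\alpha^*$ is self-adjoint and idempotent, which the paper takes for granted.
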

\begin{proof}
Let $\alpha \in E^*$ be given. 
Since $\alpha \alpha^*$ is a projection in $L_{R}(E)$ and since $\phi$ is a $*$-homomorphism, $\phi(\alpha \alpha^*)$ is a projection. 
By the assumption on $L_{R}(F)$, it follows that $\phi(\alpha \alpha^*) \in \DD(L_{R}(F))$.
By linearity of $\phi$ and the fact that $\DD(L_{R}(F))$ is an algebra, we get that $\phi(\DD(L_{R}(E))) \subseteq \DD(L_{R}(F))$.
\end{proof}

\begin{remark}
The requirement that $L_{R}(F)$ only has diagonal projections is clearly crucial for the result.
For instance, there exist $*$-automorphisms of $L_{2,\C}$ that do not preserve the diagonal.
As an example of this, consider the unitary 
\[
  u = \frac{1}{\sqrt{2}} \left( aa^* - ab^* + ba^* + bb^* \right),
\]
and define an automorphism $\psi$ of $L_{2,\C}$ by $\psi(x) = u x u^*$.
Then 
\[
  \psi(aa^*) = \frac{1}{2}\left( aa^* + ab^* + ba^* + bb^*\right),
\]
so $\psi$ does not preserve the diagonal. 
This $u$ was constructed by mapping the unitary that rotates by $45$ degrees from $M_2(\C)$ into $L_{2,\C}$ using the map discussed in \cite[Example 5.13]{BS}.
\end{remark}

\begin{theorem} \label{thm: isomorphism same sign}
Let $E,F$ be finite, essential, strongly connected graphs that satisfy Condition (L), and let $R$ be a subring of $\C$ closed under complex conjugation.
Assume furthermore that $L_{R}(F)$ only has diagonal projections. 
If $L_{R}(E)$ is $*$-isomorphic to $L_{R}(F)$ then 
\[
  \sgn(\det (I - A_E)) = \sgn( \det(I - A_F)).
\]
\end{theorem}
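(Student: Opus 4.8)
The plan is to assemble the theorem directly from two pieces that have already been established: Proposition \ref{prop: only dynamical homomorphisms}, which says that a $*$-homomorphism into a Leavitt path algebra with only diagonal projections is automatically diagonal preserving, and Theorem \ref{thm: algebraic matsumoto matui}, the weak algebraic form of Matsumoto and Matui's result. In effect this statement is the payoff that lets us trade the dynamical hypothesis ``$\phi(\DD(L_R(E))) \subseteq \DD(L_R(F))$'' of Theorem \ref{thm: algebraic matsumoto matui} for the purely internal hypothesis ``$L_R(F)$ only has diagonal projections.''

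Concretely, suppose $\phi \colon L_R(E) \to L_R(F)$ is a $*$-isomorphism. First I would feed $\phi$ and the standing assumption on $L_R(F)$ into Proposition \ref{prop: only dynamical homomorphisms} to conclude that $\phi(\DD(L_R(E))) \subseteq \DD(L_R(F))$. The mechanism is that each diagonal generator $\alpha\alpha^*$ is a projection, so its image under the $*$-homomorphism $\phi$ is again a projection in $L_R(F)$, which by hypothesis must lie in $\DD(L_R(F))$; linearity of $\phi$ and the fact that the diagonal is a subalgebra then upgrade this to the claimed inclusion.

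With the diagonal inclusion secured, the remaining step is immediate. The map $\phi$ is now a $*$-isomorphism satisfying exactly the hypothesis of Theorem \ref{thm: algebraic matsumoto matui}, and since $E$ and $F$ are finite, essential, strongly connected, and satisfy Condition (L), that theorem delivers
\[
  \sgn(\det(I - A_E)) = \sgn(\det(I - A_F)).
\]

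I do not expect a genuine obstacle here, because all the substantive content has been front-loaded into the cited results: the passage to graph $C^*$-algebras and the MASA argument of Lemma \ref{lem: extending diagonal iso}, together with the appeal to Matsumoto and Matui, are packaged inside Theorem \ref{thm: algebraic matsumoto matui}, while the projection-rigidity is precisely what Proposition \ref{prop: only dynamical homomorphisms} converts into a diagonal-preservation statement. The only care needed is to verify that the hypotheses of the two invoked results line up with those assumed here, which they do verbatim, so the argument is a one-line composition of the two.
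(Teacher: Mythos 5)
Your proposal is correct and is exactly the paper's own proof: apply Proposition \ref{prop: only dynamical homomorphisms} to the $*$-isomorphism $\phi$ to obtain $\phi(\DD(L_{R}(E))) \subseteq \DD(L_{R}(F))$, then invoke Theorem \ref{thm: algebraic matsumoto matui}. The hypotheses do line up verbatim, as you note, so nothing further is needed.
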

\begin{proof}
Suppose $\phi \colon L_{R}(E) \to L_{R}(F)$
is a $*$-isomorphism.  
By Proposition \ref{prop: only dynamical homomorphisms}, we must have $\phi(\DD(L_{R}(E))) \subseteq \DD(L_{R}(F))$, so by Theorem \ref{thm: algebraic matsumoto matui}, 
\[
  \sgn(\det (I - A_E)) = \sgn( \det(I - A_F)). \qedhere
\]
\end{proof}

\begin{corollary} \label{cor: iso implies same sign}
Let $E,F$ be finite, essential, strongly connected graphs that satisfy Condition (L) \and let $R$ be \nice.
If $L_{R}(E)$ is $*$-isomorphic to $L_{R}(F)$ then
\[
  \sgn(\det (I - A_E)) = \sgn( \det(I - A_F)).
\]
\end{corollary}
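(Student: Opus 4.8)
The plan is to combine the two results that have been set up precisely for this purpose, namely Theorem \ref{thm: isomorphism same sign} and Corollary \ref{cor: only diagonal projections}. The key observation is that the hypothesis ``$R$ is \nice'' packages together exactly what each of those results needs: a \nice\ ring is by definition a unital, commutative subring of $\C$ that is closed under complex conjugation and that has an \eup. Thus, as far as Theorem \ref{thm: isomorphism same sign} is concerned, $R$ is already a subring of $\C$ closed under complex conjugation, and the only additional hypothesis of that theorem that is not immediately visible in the statement of the corollary is the requirement that $L_{R}(F)$ have only diagonal projections.

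First I would dispatch that missing hypothesis. Since $F$ is a finite graph and $R$ is \nice, Corollary \ref{cor: only diagonal projections} applies verbatim and tells us that every projection in $L_{R}(F)$ lies in $\DD(L_{R}(F))$; that is, $L_{R}(F)$ has only diagonal projections. Note that it is the existence of an \eup\ (rather than the full strength of working over $\Z$) that is doing the work here, which is why the conclusion extends beyond $\Z$ to the whole class of \nice\ rings.

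With this in hand, all the hypotheses of Theorem \ref{thm: isomorphism same sign} are verified: $E$ and $F$ are finite, essential, strongly connected and satisfy Condition (L); $R$ is a subring of $\C$ closed under complex conjugation; and $L_{R}(F)$ has only diagonal projections. Applying that theorem to the assumed $*$-isomorphism $L_{R}(E) \to L_{R}(F)$ yields immediately that
\[
  \sgn(\det (I - A_E)) = \sgn( \det(I - A_F)),
\]
which is the desired conclusion.

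I do not expect any genuine obstacle in this argument, since the corollary is essentially a packaging statement: all of the real content lives in the earlier results, principally the analysis of projections in Theorem \ref{thm: projections look nice} feeding Corollary \ref{cor: only diagonal projections}, and the diagonal-preserving translation of Matsumoto and Matui's theorem underlying Theorem \ref{thm: isomorphism same sign}. The only point requiring a moment's care is to confirm that ``\nice'' supplies precisely the ring-theoretic hypotheses demanded separately by the two results being combined, so that no extra assumption on $R$ sneaks in; once that bookkeeping is checked, the proof is a one-line combination of the two cited statements.
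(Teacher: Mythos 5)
Your proposal is correct and takes exactly the same route as the paper, which likewise deduces the corollary immediately from Theorem \ref{thm: isomorphism same sign} once Corollary \ref{cor: only diagonal projections} supplies the diagonal-projections hypothesis. If anything, your version is slightly more careful than the paper's one-line proof, which cites diagonality of projections in $L_{R}(E)$ where the hypothesis of Theorem \ref{thm: isomorphism same sign} actually concerns $L_{R}(F)$ --- an immaterial slip, since the corollary applies to any finite graph, as you correctly apply it.
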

\begin{proof}
This an immediate consequence of Theorem \ref{thm: isomorphism same sign}, since all projections in $L_{R}(E)$ are diagonal by Corollary \ref{cor: only diagonal projections}. 
\end{proof}

\begin{corollary} \label{cor: main result}
Let $R$ be \nice. 
Then $L_{2,R}$ is not $*$-isomorphic to $L_{2-,R}$.
In particular, $L_{2,\Z}$ is not $*$-isomorphic to $L_{2-,\Z}$. 
\end{corollary}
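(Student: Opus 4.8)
The plan is to deduce this directly from Corollary \ref{cor: iso implies same sign}, so that all that remains is to verify its hypotheses for the pair $(E_2, E_{2-})$ and to carry out the Bowen--Franks determinant computation. First I would check that both graphs meet the standing requirements of that corollary. Both are visibly finite. In $E_2$ the single vertex $u$ both emits and receives the edges $a, b$, and in $E_{2-}$ every vertex emits and receives at least one edge, so both graphs are essential. The graph $E_2$ is strongly connected trivially, while in $E_{2-}$ one checks that $u, v_1, v_2$ are mutually reachable (for instance $u \xrightarrow{d_1} v_1 \xrightarrow{e_2} v_2$ and $v_2 \xrightarrow{e_3} v_1 \xrightarrow{d_2} u$), so $E_{2-}$ is strongly connected as well. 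Finally, every vertex of either graph emits at least two edges, so every cycle has an exit and both graphs satisfy Condition (L). Since $R$ is \nice\ by hypothesis, Corollary \ref{cor: iso implies same sign} applies to this pair.

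Next I would compute the two determinants. The adjacency matrix of $E_2$ is the $1 \times 1$ matrix $A_{E_2} = (2)$, so $\det(I - A_{E_2}) = -1$. Ordering the vertices of $E_{2-}$ as $u, v_1, v_2$, its adjacency matrix is
\[
  A_{E_{2-}} = \begin{pmatrix} 2 & 1 & 0 \\ 1 & 1 & 1 \\ 0 & 1 & 1 \end{pmatrix},
\]
and a direct expansion of the determinant of
\[
  I - A_{E_{2-}} = \begin{pmatrix} -1 & -1 & 0 \\ -1 & 0 & -1 \\ 0 & -1 & 0 \end{pmatrix}
\]
gives $\det(I - A_{E_{2-}}) = 1$. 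Consequently
\[
  \sgn(\det(I - A_{E_2})) = -1 \neq 1 = \sgn(\det(I - A_{E_{2-}})).
\]

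The conclusion is then immediate from the contrapositive of Corollary \ref{cor: iso implies same sign}: since the two determinant signs differ, $L_{2,R} = L_R(E_2)$ and $L_{2-,R} = L_R(E_{2-})$ cannot be $*$-isomorphic. Taking $R = \Z$, which is \nice, gives the stated special case. I anticipate no genuine obstacle in this argument; the substantive work lies entirely in the earlier results (Theorem \ref{thm: algebraic matsumoto matui}, Theorem \ref{thm: isomorphism same sign}, and Corollary \ref{cor: only diagonal projections}), and what is left here is only the short verification that the Cuntz splice performed on $E_2$ flips the sign of the Bowen--Franks determinant while the hypotheses on the graphs remain intact.
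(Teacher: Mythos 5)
Your proposal is correct and takes essentially the same route as the paper: both deduce the result from Corollary \ref{cor: iso implies same sign} by computing $\det(I - A_{E_2}) = -1$ and $\det(I - A_{E_{2-}}) = 1$ from the same adjacency matrices. The only difference is that you spell out the verification that $E_2$ and $E_{2-}$ are finite, essential, strongly connected and satisfy Condition (L), which the paper leaves implicit.
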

\begin{proof}
Recall that by definition $L_{2,R} = L_{R}(E_2)$ and $L_{2-,R} = L_{R}(E_{2-})$.
We see that the adjacency matrices are 
\[
  A_{E_2} = \begin{pmatrix}
            2
           \end{pmatrix},
\]
and 
\[
  A_{E_{2-}} = \begin{pmatrix}
                2 & 1 & 0 \\
                1 & 1 & 1 \\
                0 & 1 & 1
               \end{pmatrix}.
\]
Hence, $\det(I - A_{E_2}) = -1$ and $\det(I - A_{E_{2-}}) = 1$, so by Corollary \ref{cor: iso implies same sign}, $L_{2,\Z} \not \cong L_{2-,\Z}$
as $*$-algebras.
\end{proof}

Although Corollary \ref{cor: main result} does not apply to fields, we can use it to say something about the situation for fields.

\begin{remark}\label{remark:fields}
Suppose $K$ is a field with characteristic $0$.
Then $L_{2,\Z}$ will embed into $L_{2,K}$, and $L_{2-,\Z}$ will embed into $L_{2-,K}$. 
Assume that there exists a $*$-isomorphism $\phi \colon  L_{2-,K} \to  L_{2,K}$ which restricts to a $*$-homomorphism from $L_{2-,\Z}$ to $L_{2,\Z}$.
The restriction is also injective, so by Corollary \ref{cor: main result}, it cannot be surjective.

One now wonders how common it is for $*$-isomorphisms between Leavitt path algebras over $K$ to restrict to $*$-homomorphisms between the corresponding algebras over $\Z$. 
In general, homomorphisms will not restrict in this way. For instance, each $z \in \C$ of modulus one defines a \emph{gauge automomorphism} $\gamma_z \colon L_{2,\C} \to L_{2,\C}$ given by $\gamma_z(a) = za$ and $\gamma_z(b) = zb$ which clearly does not restrict to an automorphism of $L_{2,\Z}$.
However, in many cases where homomorphisms are written down explicitly, the field of coefficients has not been used, and such homomorphisms must clearly restrict to homomorphisms over $\Z$.
For some examples of this phenomenon, see for instance \cite{AbramsAnhPardo, AbramsLoulyPardoSmith, BS, RuizTomforde}.
In fact, at the end of \cite{TomfordeLeavittOverRing}, Tomforde suggests that Leavitt path algebras over $\Z$ may provide the key to understanding this non-dependence on the field of coefficients. 

Our result leaves two ways for $L_{2-,K}$ and $L_{2,K}$ to be $*$-isomorphic: Either the isomorphism makes explicit use of the field, so that it does not induce a $*$-homomorphism from $L_{2-,\Z}$ to $L_{2,\Z}$.
Or, if the isomorphism does induce such a $*$-homomorphism, then this induced map cannot be surjective.
Either way, a $*$-isomorphism between $L_{2-,K}$ and $L_{2,K}$, if it exists, will have to be a fairly complicated map.
\end{remark}

We conclude with a few remarks on the possibility that $L_{2,R}$ and $L_{2-,R}$ are Morita equivalent. 

\begin{remark}%algebraic K-theory and Morita equivalence.
\label{remark:morita}
Let $K$ be a field. 
If $L_{2,K}$ and $L_{2-,K}$ are Morita equivalent, they must be isomorphic as rings (this follows from the argument given in the proof of $(2)$ implies $(1)$ in \cite[Proposition 10.4]{RuizTomforde}).
It is also worth noting that there are no known examples of Leavitt path algebras that are isomorphic as rings, but not as $*$-algebras.
See \cite{AbramsTomforde} for a discussion of how various notions of isomorphism of Leavitt path algebras are related to isomorphism of graph $C^*$-algebras.

When working over a commutative, unital ring $R$, the connection between Morita equivalence and ring isomorphism of $L_{2,R}$ and $L_{2-,R}$ is less clear. 
If $R$ is a regular supercoherent ring, then both $L_{2,R}$ and $L_{2-,R}$ have trivial algebraic $K$-theory \cite{AraBrustengaCortinas}.
So at least this does not provide an obvious proof that they are not Morita equivalent.
We note that the class of regular supercoherent rings covers the class of Noetherian regular rings which in turn contains the class of principal ideal domains.

To go from a Morita equivalence of $L_{2,K}$ and $L_{2-,K}$ to a ring isomorphism, one uses that $L_{2,K}$ is simple.
In particular, the description of the $K_0$-group of a simple purely infinite ring given in \cite[Corollary 2.2]{AraGoodearlPardo} is crucial.
However, if the ring $R$ has non-trivial ideals, then $L_{2,R}$ will clearly not be simple.
Therefore, it is unclear to the authors if Morita equivalence of $L_{2,R}$ and $L_{2-,R}$ implies ring isomorphism.
\end{remark}

\section*{Acknowledgements}
As mentioned above, the proof of Proposition \ref{prop: standard form unitaries} is due to Chris Smith when the ring is $\Z$, and the authors are very grateful to him for sharing this result. 
After the second named author gave talk at the 2015 Norwegian Operator Algebras Meeting about the case $R = \Z$, emphasizing the property of $\Z$ used in the proof (having an \eup), it was pointed out to the authors by Christian Skau and Lars Tuset that many rings have this property.
The authors thank them for this valuable insight.
Additionally, the authors would like to thank Gene Abrams, Mark Tomforde, and Efren Ruiz for their comments on an early draft of this paper. 
%\adam{The authors would also like to thank Gene Abrams, Mark Tomforde, and Efren Ruiz for their comments on an early draft of this paper.}

The authors' collaboration began at the conference \emph{Classification of $C^*$-algebras, flow equivalence of shift spaces, and graph and Leavitt path algebras} hosted at the University of Louisiana at Lafayette, and the authors are grateful to the organizers and to the National Science Foundation for providing this opportunity.
This work was supported by VILLUM FONDEN through the experimental mathematics network at the University of Copenhagen and by the Danish National Research Foundation through the Centre for Symmetry and Deformation (DNRF92).

%%%%%%%%%%%%%%
%Bibliography

%%%%%%%%%%%%%%

%%%%%%%%%%%%%% 
\end{document}